\newtheorem{thm}{Theorem}[section]
\newtheorem{cor}[thm]{Corollary}
\newtheorem{lem}[thm]{Lemma}
\newtheorem{prob}[thm]{Problem}
\newtheorem{defin}[thm]{Definition}
\newtheorem{eg}[thm]{Example}
\newcommand{\tr}{\text{Tr}}
\newcommand{\diag}{\text{diag }}
\journal{J. Math. Anal. Appl.}
\begin{document}
	
	\begin{frontmatter}
		
		\title{When $D$-companion matrix meets incomplete polynomials}
		
		\author{Teng Zhang\fnref{1footnote}}
		\address{School of Mathematics and Statistics, Xi'an Jiaotong University, Xi'an 710049, China}
		\fntext[1footnote]{Email: teng.zhang@stu.xjtu.edu.cn}

		\begin{abstract}
			 In this paper,  we provide a simple proof of a generalization of the Gauss-Lucas theorem. By using methods of $D$-companion matrix, we get the majorization relationship between the zeros of  convex combinations of incomplete polynomials and an origin polynomial. Moreover, we prove that  the set of all zeros of all convex combinations of incomplete polynomials  coincides with the closed convex hull of zeros of the original polynomial. The location of zeros of convex combinations of incomplete polynomials is determined.
		\end{abstract}
		
		\begin{keyword}
			Gauss-Lucas theorem, $D$-companion matrix, incomplete polynomials, majorization relationship of zeros
		\end{keyword}
		
	\end{frontmatter}
	
	
	\section{Introduction} 
	The most basic and important theorem linking the zeros of the derivative of a polynomial to the zeros of the polynomial are attributed to Gauss and Lucas.
	\begin{thm}(Gauss-Lucas' Theorem).\cite[p. 18]{BE95}\label{GL}
		Let $A_n(z)$ be a monic and complex polynomial of degree $n$. Then all the zeros of the derivative $A_n'(z)$ of $A_n(z)$ are contained in the closed convex hull of the set of zeros of $A_n(z)$.
	\end{thm}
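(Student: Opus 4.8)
The plan is to deduce Theorem~\ref{GL} from one elementary fact: any point at which the logarithmic derivative $A_n'/A_n$ vanishes is automatically a convex combination of the zeros of $A_n$. Write $A_n(z)=\prod_{k=1}^{n}(z-z_k)$, with $z_1,\dots,z_n$ the zeros of $A_n$ counted with multiplicity, and let $w$ be an arbitrary zero of $A_n'$. First I would dispose of the degenerate case: if $w=z_j$ for some $j$, then $w$ already lies in the zero set of $A_n$, hence in its closed convex hull, and there is nothing to prove.

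So assume $w\notin\{z_1,\dots,z_n\}$, so that $A_n(w)\neq 0$. The key computation is to evaluate the partial-fraction identity
\[
\frac{A_n'(z)}{A_n(z)}=\sum_{k=1}^{n}\frac{1}{z-z_k}
\]
at $z=w$; since $A_n'(w)=0$ this gives $\sum_{k=1}^{n}(w-z_k)^{-1}=0$. Taking complex conjugates and using $\overline{(w-z_k)^{-1}}=(w-z_k)\,|w-z_k|^{-2}$, the relation becomes $\sum_{k=1}^{n}\lambda_k(w-z_k)=0$ with $\lambda_k:=|w-z_k|^{-2}>0$, i.e.
\[
w=\sum_{k=1}^{n}\mu_k\,z_k,\qquad \mu_k:=\frac{\lambda_k}{\lambda_1+\cdots+\lambda_n}\ge 0,\qquad \mu_1+\cdots+\mu_n=1.
\]
Hence $w$ lies in the convex hull of $\{z_1,\dots,z_n\}$, a fortiori in the closed convex hull of the zero set of $A_n$, which completes the argument.

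I do not anticipate a genuine obstacle, since this is a classical statement; the step that does all the work is the conjugation trick, which converts the ``harmonic'' identity $\sum_k (w-z_k)^{-1}=0$ — in which $w$ is not visibly an average of the $z_k$ — into an honest convex-combination identity after clearing denominators by the strictly positive weights $|w-z_k|^{-2}$. The only thing to be careful about is to treat the degenerate case $w\in\{z_1,\dots,z_n\}$ separately, since there the partial-fraction identity is not available. It is worth remarking, since the rest of the paper is built on the $D$-companion matrix, that there is an alternative proof in which the critical points of $A_n$ are exhibited as eigenvalues of a fixed matrix associated to $\diag(z_1,\dots,z_n)$, and their location is then read off from the spectrum of that matrix; I would nevertheless keep the partial-fraction proof as the primary one here because it is entirely self-contained, and bring in the matrix formulation only for the new results on incomplete polynomials, where it becomes indispensable.
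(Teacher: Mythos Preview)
Your proof is correct and is essentially the same argument the paper uses: the paper does not prove Theorem~\ref{GL} separately (it is quoted from \cite{BE95}), but its proof of the generalization Theorem~\ref{thm1} in Section~2 is precisely your partial-fraction/conjugation trick with the uniform weights $1/n$ replaced by arbitrary convex weights $\gamma_j$, including the same separate handling of the degenerate case $w\in\{z_1,\dots,z_n\}$.
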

	In \cite{DB08}, D\'iaz-Barrero and Egozcue introduced the concept of incomplete polynomial. A definition follows.
	\begin{defin}
		(Incomplete polynomials). Let $z_1,z_2,\ldots,z_n$ be $n$, not necessarily distinct, complex numbers. The incomplete polynomials of degree $n-1$, associated with $z_1,z_2,\ldots,z_n$, are the polynomials $g_k(z)$, $1\le k\le n$, given by
		\begin{eqnarray*}
			g_k(z)=\prod_{j=1,j\neq k}^n (z-z_j).
		\end{eqnarray*}
	\end{defin}
Let $A_n(z)$ be a monic polynomial of degree $n$  whose zeros are $z_1,z_2,\ldots,z_n$ (i.e., the original polynomial). That is,
\[
A_n(z)=\prod_{j=1}^{n}(z-z_{j}).
\] 
Let $\gamma=(\gamma_1,\gamma_2,\ldots,\gamma_n)$ be nonnegative real numbers such that $\sum\limits_{k=1}^n\gamma_k=1$. The corresponding linear convex combination of incomplete polynomials is denoted $A_n^\gamma(z)=\sum\limits_{k=1}^n \gamma_kg_k(z)$. Note that $A_n^\gamma(z)$ is a monic polynomial of degree $n-1$. Clearly, the derivative of $A_n(z)$, normalized to be monic is then one of such convex linear combinations.

 D\'iaz-Barrero and Egozcue \cite{DB08} generalized the Gauss-Lucas theorem \ref{GL} to $A_n^\gamma(z)$.
 \begin{thm}\cite[Theorem 2]{DB08}\label{thm1}
 	Let $z_1,z_2,\ldots,z_n$ be $n$, not necessarily distinct, complex numbers.  Then, the polynomial $A_n^\gamma(z)=\sum\limits_{k=1}^n\gamma_kg_k(z)$ has all its zeros in or on the convex hull $H(z_1,z_2,\ldots,z_n)$ of the zeros of $A_n(z)=\prod\limits_{j=1}^{n}(z-z_j)$.
 \end{thm}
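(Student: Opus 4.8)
The plan is to realize the zeros of $A_n^\gamma$ as eigenvalues of a compression of a diagonal matrix — the natural $D$-companion-matrix setting — and then to localize them through the numerical range of that compression. Set $D=\diag(z_1,\ldots,z_n)$, write $e=(1,\ldots,1)^{\top}$ and, crucially, $w=(\sqrt{\gamma_1},\ldots,\sqrt{\gamma_n})^{\top}$; since $\sum_k\gamma_k=1$ this $w$ is a unit vector. Let $P=I_n-ww^{*}$ be the orthogonal projection onto the hyperplane $w^{\perp}$, and let $C$ be the compression $PDP$ viewed as an operator on $w^{\perp}$. (This is the object sitting behind the $D$-companion matrix: the $n\times n$ matrix $M=D-(De)\gamma^{\top}$ satisfies $\det(zI-M)=z\,A_n^\gamma(z)$ by the matrix determinant lemma, so its eigenvalues are $0$ together with the zeros of $A_n^\gamma$; I will work with the compression $C$ directly because its numerical range is transparent.) The strategy is: show every zero of $A_n^\gamma$ that is not already one of the $z_j$ is an eigenvalue of $C$, then push it into $H(z_1,\ldots,z_n)$.

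First I would dispose of the trivial case: if $\lambda$ is a zero of $A_n^\gamma$ with $\lambda=z_j$ for some $j$, then $\lambda\in H(z_1,\ldots,z_n)$ and there is nothing to do. So assume $\lambda\neq z_j$ for all $j$. From $A_n^\gamma(z)=A_n(z)\sum_{k=1}^n\gamma_k/(z-z_k)$ (valid off the $z_j$) and $A_n(\lambda)\neq0$ one gets $\sum_{k=1}^n\gamma_k/(z_k-\lambda)=0$. Now define $v\in\mathbb{C}^n$ by $v_k=\sqrt{\gamma_k}/(z_k-\lambda)$. Then $w^{*}v=\sum_k\gamma_k/(z_k-\lambda)=0$, so $v\in w^{\perp}$, and $v\neq0$ since some $\gamma_k>0$. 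The one identity to spot is $w^{*}Dv=\sum_k\gamma_k z_k/(z_k-\lambda)=\sum_k\gamma_k+\lambda\sum_k\gamma_k/(z_k-\lambda)=1$; with it, $PDv=Dv-(w^{*}Dv)w=Dv-w$, whose $k$-th entry is $\sqrt{\gamma_k}\bigl(z_k/(z_k-\lambda)-1\bigr)=\lambda v_k$. Hence $PDv=\lambda v$, and since $Pv=v$ this gives $Cv=PDPv=\lambda v$ with $v\neq0$, so $\lambda$ is an eigenvalue of $C$.

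Then I would finish by the standard numerical-range estimate: every eigenvalue of $C$ lies in $W(C)$, and for a unit vector $v\in w^{\perp}$ we have $v^{*}Cv=(Pv)^{*}D(Pv)=v^{*}Dv$, whence $W(C)\subseteq W(D)$; since $D$ is normal, $W(D)$ equals the convex hull of its eigenvalues, i.e. $W(D)=H(z_1,\ldots,z_n)$. Combining with the previous step, every zero of $A_n^\gamma$ lies in $H(z_1,\ldots,z_n)$, which is exactly the claim of Theorem~\ref{thm1}. One could equally replace this last step by the purely analytic manipulation of $\sum_k\gamma_k/(z_k-\lambda)=0$ (multiplying through by conjugates exhibits $\lambda$ as a convex combination $\sum_k\mu_k z_k/\sum_k\mu_k$ with $\mu_k=\gamma_k/|z_k-\lambda|^2\ge0$), but the compression argument is the one that fits the $D$-companion-matrix framework and that I expect to generalize to the majorization statements.

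I do not anticipate a genuine obstacle. The argument needs no distinctness or genericity hypothesis: the only inversion of $D-\lambda I$ occurs precisely in the nontrivial case $\lambda\notin\{z_j\}$, and the degenerate possibilities — $\lambda$ coinciding with some $z_j$, or some $\gamma_k$ vanishing — are absorbed immediately (the former is trivial, the latter only kills some coordinates of $v$ while leaving $v\neq0$). The only two points requiring a little foresight are using the square-root weights $w=(\sqrt{\gamma_1},\ldots,\sqrt{\gamma_n})^{\top}$ instead of $\gamma$ itself, so that the projection $P$ is orthogonal and the inclusion $W(C)\subseteq W(D)$ is available, and noticing the partial-fraction identity $w^{*}Dv=1$ that turns $v$ into an honest eigenvector of the compression.
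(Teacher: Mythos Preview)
Your argument is correct, but it proceeds along a genuinely different line from the paper's own proof of Theorem~\ref{thm1}. The paper gives only the short analytic computation you mention parenthetically at the end: from $\sum_k\gamma_k/(w-z_k)=0$ one multiplies numerator and denominator by $\overline{w-z_k}$, separates the $\bar w$ terms, takes conjugates, and reads off $w=\sum_k\beta_k z_k$ with $\beta_k=\bigl(\gamma_k/|w-z_k|^2\bigr)\big/\sum_j\gamma_j/|w-z_j|^2$. No matrices enter that proof at all. Your route instead builds the symmetrized $D$-companion object $C=PDP|_{w^\perp}$ with $P=I-ww^*$ and $w=(\sqrt{\gamma_1},\ldots,\sqrt{\gamma_n})^\top$, exhibits each nontrivial zero of $A_n^\gamma$ as an eigenvalue of $C$, and then invokes $W(C)\subseteq W(D)=H(z_1,\ldots,z_n)$. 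The paper's approach buys brevity and requires nothing beyond elementary algebra; yours costs a few lines more but is the argument that actually lives inside the $D$-companion framework the paper advertises, and the compression/numerical-range viewpoint is precisely what the paper later exploits (in the similar form $D(I-\Lambda^{1/2}J\Lambda^{1/2})$) to obtain the majorization Theorems~\ref{thm3} and~\ref{thm4}. In that sense your proof of Theorem~\ref{thm1} anticipates the machinery the paper only introduces afterwards.
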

In this paper, we provide a simple proof of the above result. 

In \cite{Sch03}, Schmeisser first get the majorization relationship between the critical points and zeros of a polynomial.
\begin{thm}\cite[Corollary 4]{Sch03}\label{thm2}
	 Let $z_1,z_2,\ldots,z_{n}$ denote the zeros of $A_n(z)$ listed in descending order of modulus and $w_1,w_2,\ldots,w_{n-1}$ denote the critical points of $A_n(z)$ listed in descending order  of modulus, $w_n:=0$. Then
	 \[
	 	\prod_{j=1}^k\left|w_j\right|\le\prod_{j=1}^k\left|z_j\right|, \text{ for } 1\le k\le n. 
	 \]
\end{thm}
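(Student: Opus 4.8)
The plan is to exhibit the critical points of $A_n(z)$ as eigenvalues of an explicit matrix built from the zeros and then to compare eigenvalue moduli with singular values. Write $D=\diag(z_1,\dots,z_n)$ and $e=(1,\dots,1)^{T}$, and let $P=I-\tfrac1n\,ee^{T}$ be the orthogonal projection onto $e^{\perp}$. The object to work with is the $D$-companion matrix $B:=DP=D-\tfrac1n(De)e^{T}$. Since $zI-B=(zI-D)+\tfrac1n(De)e^{T}$ is a rank-one perturbation of $zI-D$, the matrix determinant lemma together with the partial-fraction identity $A_n'(z)/A_n(z)=\sum_{j=1}^{n}(z-z_j)^{-1}$ gives, for $z$ distinct from all $z_j$,
\[
\det(zI-B)=A_n(z)\Bigl(1+\tfrac1n\sum_{j=1}^{n}\frac{z_j}{z-z_j}\Bigr)=A_n(z)\cdot\frac{z}{n}\cdot\frac{A_n'(z)}{A_n(z)}=\frac{z\,A_n'(z)}{n}.
\]
Since the outer equality is between monic polynomials of degree $n$ in $z$, it holds for all $z$, and by continuity in $(z_1,\dots,z_n)$ it persists when the zeros are not distinct. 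Hence the eigenvalues of $B$, counted with multiplicity, are exactly $w_1,\dots,w_{n-1}$ together with $w_n:=0$; this accounts for the bookkeeping convention $w_n=0$ in the statement.

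Next I would invoke Weyl's multiplicative inequality: for any complex $n\times n$ matrix $M$ with eigenvalues ordered $|\lambda_1(M)|\ge\cdots\ge|\lambda_n(M)|$ and singular values $\sigma_1(M)\ge\cdots\ge\sigma_n(M)$, one has $\prod_{j=1}^{k}|\lambda_j(M)|\le\prod_{j=1}^{k}\sigma_j(M)$ for every $1\le k\le n$. Applied to $M=B$ this yields $\prod_{j=1}^{k}|w_j|\le\prod_{j=1}^{k}\sigma_j(B)$.

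It then remains to bound $\sigma_j(B)$ by $|z_j|$. As the $z_j$ are listed in descending order of modulus, $\sigma_j(D)=|z_j|$; and since $P$ is an orthogonal projection we have $\|P\|=1$, so the standard estimate $\sigma_j(XY)\le\sigma_j(X)\,\|Y\|$---immediate from the Eckart--Young formula $\sigma_j(X)=\min\{\|X-R\|:\operatorname{rank}R\le j-1\}$ by replacing a minimizer $R$ with $RY$---gives $\sigma_j(B)=\sigma_j(DP)\le\sigma_j(D)=|z_j|$. Chaining the two displays,
\[
\prod_{j=1}^{k}|w_j|\le\prod_{j=1}^{k}\sigma_j(B)\le\prod_{j=1}^{k}|z_j|,\qquad 1\le k\le n,
\]
which is the assertion. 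The one point that needs genuine care is the spectral identity $\det(zI-B)=z\,A_n'(z)/n$ in the presence of repeated zeros, where the resolvent $(zI-D)^{-1}$ in the computation degenerates; this is handled by establishing it first on the dense set of distinct-zero configurations and passing to the limit. Everything else---the matrix determinant lemma, Weyl's inequality, and $\sigma_j(XY)\le\sigma_j(X)\|Y\|$---is classical, so the work lies in assembling the ingredients rather than in any single hard estimate.
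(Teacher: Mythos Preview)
Your argument is correct and follows essentially the same route as the paper's proof of the more general Theorem~\ref{thm3} (of which Theorem~\ref{thm2} is the special case $\gamma_j=\tfrac1n$): build the $D$-companion matrix $D(I-\tfrac1nJ)$, identify its spectrum with $\{w_1,\dots,w_{n-1},0\}$, apply Weyl's majorization (Lemma~\ref{le2}), and then bound the singular values by the $|z_j|$. The only cosmetic difference is in this last step: because $P=I-\tfrac1nJ$ is already an orthogonal projection in the equal-weight case, you can use the elementary bound $\sigma_j(DP)\le\sigma_j(D)\|P\|$, whereas the paper, working with general weights, first symmetrizes to $D(I-\Lambda^{1/2}J\Lambda^{1/2})$ and invokes the L\"owner--Heinz inequality to reach the same conclusion.
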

We will see Theorem \ref{thm2} can be generalized the following result.
\begin{thm}\label{thm3}
		 Let $z_1,z_2,\ldots,z_{n}$ denote the zeros of $A_n(z)$ listed in descending order of modulus and $w_1,w_2,\ldots,w_{n-1}$ denote the zeros of $A^\gamma_n(z)$ listed in descending order  of modulus, $w_n:=0$. Then
	\[
	\prod_{j=1}^k\left|w_j\right|\le\prod_{j=1}^k\left|z_j\right|, \text{ for } 1\le k\le n. 
	\]
\end{thm}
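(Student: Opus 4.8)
The plan hinges on the $D$-companion matrix, which the abstract promises as the main tool. Recall the classical construction: given $A_n(z) = \prod_{j=1}^n (z - z_j)$ with zeros $z_1,\dots,z_n$, and a probability vector $\gamma = (\gamma_1,\dots,\gamma_n)$, one forms the matrix
\[
D_\gamma = \diag(z_1,\dots,z_n) - \mathbf{1}\,\gamma^{\!\top}\! \diag(z_1,\dots,z_n) \quad\text{or a close variant},
\]
so that the nonzero eigenvalues of $D_\gamma$ are precisely the zeros $w_1,\dots,w_{n-1}$ of $A_n^\gamma(z)$, together with one extra eigenvalue equal to $0$ (this is the role of $w_n := 0$ in the statement). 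I would first establish this eigenvalue identification carefully — either by citing the standard reference for the $D$-companion matrix (e.g. the work of de Boor, or Cheung--Ng, where $A_n^\gamma$ arises as $\det(zI - D_\gamma)$ up to the factor $z$) or by a direct determinant computation using the matrix-determinant lemma, since $D_\gamma$ is a rank-one perturbation of $\diag(z_1,\dots,z_n)$.

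Once the spectrum of $D_\gamma$ is pinned down as $\{w_1,\dots,w_{n-1},0\} = \{w_1,\dots,w_n\}$, the majorization inequality $\prod_{j=1}^k |w_j| \le \prod_{j=1}^k |z_j|$ becomes a statement comparing the ordered moduli of the eigenvalues of $D_\gamma$ with the ordered moduli of its diagonal entries. The natural engine here is Weyl's inequality relating eigenvalue moduli to singular values, combined with the fact that $D_\gamma$ is obtained from $\diag(z_1,\dots,z_n)$ by left-multiplication by the matrix $M = I - \mathbf{1}\gamma^{\!\top}$ (or the appropriate stochastic-type factor), whose singular values are controlled. Concretely: the eigenvalue moduli of any matrix are log-majorized by its singular values (Weyl), so $\prod_{j=1}^k |w_j| \le \prod_{j=1}^k \sigma_j(D_\gamma)$; then I would bound $\prod_{j=1}^k \sigma_j(D_\gamma) \le \prod_{j=1}^k \sigma_j(\diag(z_1,\dots,z_n)) = \prod_{j=1}^k |z_j|$ using the multiplicativity of products of singular values under the factorization $D_\gamma = M \cdot \diag(z)$ together with the fact that $\sigma_1(M) \le 1$ — this last point requires $M$ to be a contraction (or requires replacing $M$ by the correct doubly-substochastic object), which is exactly where the precise form of the $D$-companion matrix matters.

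The step I expect to be the main obstacle is verifying that the relevant factor $M$ is a contraction in the operator norm, i.e. that $\sigma_1(M) \le 1$ and more generally that the full product-of-singular-values bound $\prod_{j=1}^k \sigma_j(M) \le 1$ holds for all $k$. A rank-one perturbation of the identity of the form $I - \mathbf{1}\gamma^{\!\top}$ is generally not a contraction for arbitrary $\gamma$, so the correct $D$-companion matrix is surely a symmetrized or conjugated version (for instance conjugating by $\diag(\sqrt{\gamma_j})$ turns the relevant matrix into $I - \sqrt{\gamma}\sqrt{\gamma}^{\top}$, an orthogonal projection complement, which \emph{is} a contraction with all singular values in $\{0,1\}$). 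Getting this conjugation exactly right — so that the eigenvalues are preserved, the diagonal-entry moduli are preserved, and the perturbation factor becomes a genuine projection — is the crux. Assuming Theorem \ref{thm2} is available, a shortcut worth pursuing is to recover Theorem \ref{thm3} by reducing to the derivative case: but since $A_n^\gamma$ is not literally a derivative of any fixed polynomial, the cleanest route remains the direct singular-value argument on the correctly normalized $D$-companion matrix, after which setting $\gamma_k = 1/n$ recovers Schmeisser's Theorem \ref{thm2} as the special case.
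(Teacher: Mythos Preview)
Your proposal is correct and follows essentially the same route as the paper. The paper uses exactly the $D$-companion matrix $D(I-\Lambda J)$ from Cheung--Ng (whose characteristic polynomial is $zA_n^\gamma(z)$), replaces it by the similar matrix $D(I-\Lambda^{1/2}J\Lambda^{1/2})$ --- precisely the $\diag(\sqrt{\gamma_j})$-conjugation you anticipated --- observes that $I-\Lambda^{1/2}J\Lambda^{1/2}=I-\sqrt{\gamma}\sqrt{\gamma}^{\,T}$ is a contraction (indeed an orthogonal projection), and then combines Weyl's eigenvalue--singular value majorization with the fact that multiplying by a contraction cannot increase singular values (via L\"owner--Heinz and Weyl monotonicity) to conclude.
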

In \cite{P07}, Pereira totally verified the  result of Cheung and Ng \cite{CN06}.
\begin{thm}\cite[Theorem 2.1]{P07}
	Let $n\ge 2$ and $z_1,z_2,\ldots,z_n\in \mathbb{C}$. Let $p(z)=\prod\limits_{j=1}^n(z-z_j)$ and $p_a(z)=\prod\limits_{j=1}^{n}(z-\left|z_j\right|)$. Let $w_1,w_2,\ldots,w_{n-1}$ denote the critical points of $p(z)$ listed in descending order of modulus and $v_1,v_2,\ldots,v_{n-1}$ denote the critical points of $p_a(z)$ listed in descending order. Then 
	\[
	\prod_{j=1}^k\left|w_j\right|\le\prod_{j=1}^k\left|v_j\right|, \text{ for } 1\le k\le n-1. 
	\]
\end{thm}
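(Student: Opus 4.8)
The plan is to realise both families of critical points spectrally through the $D$-companion matrix and then compare. Write $e=(1,\dots,1)^{T}$, let $P=I-\tfrac1n ee^{*}$ be the orthogonal projection onto $e^{\perp}$, and set $D=\diag(z_1,\dots,z_n)$, $D_a=\diag(|z_1|,\dots,|z_n|)$. As in the $D$-companion computation, $M=DP$ has characteristic polynomial $\tfrac{z}{n}p'(z)$, so its eigenvalues are $w_1,\dots,w_{n-1}$ together with $0$; likewise $M_a=D_aP$ has eigenvalues $v_1,\dots,v_{n-1}$ and $0$. Crucially the \emph{same} projection $P$ appears in both, and $M_a$ is similar to the positive semidefinite matrix $D_a^{1/2}PD_a^{1/2}$, so the $v_j$ are nonnegative and are its eigenvalues. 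Since $\prod_{j=1}^{k}|w_j|$ equals the spectral radius of the $k$-th compound $C_k(M)$ (the largest modulus among the $\binom nk$ products of $k$ eigenvalues), and similarly $\prod_{j=1}^{k}v_j=\rho(C_k(M_a))$, the theorem reduces to
\[
\rho\!\left(C_k(M)\right)\le \rho\!\left(C_k(M_a)\right),\qquad 1\le k\le n-1.
\]

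I would first isolate the case $k=1$ as a base lemma: for a complex diagonal matrix $\Delta$ and any orthogonal projection $C$, $\rho(\Delta C)\le\rho(|\Delta|\,C)$, where $|\Delta|$ is the entrywise modulus. The proof is short. Because $C^2=C$, the nonzero spectra of $\Delta C$ and of the compression $C\Delta C$ coincide, so $\rho(\Delta C)=\rho(C\Delta C)$. If $\mu$ is an eigenvalue of maximum modulus with unit eigenvector $x$, then $x\in\operatorname{range}(C)$ forces $Cx=x$, whence $\mu=\langle \Delta x,x\rangle=\sum_i d_i|x_i|^2$ and, by the triangle inequality, $|\mu|\le\sum_i|d_i|\,|x_i|^2=\langle |\Delta|x,x\rangle=\langle C|\Delta|Cx,x\rangle\le\lambda_{\max}(C|\Delta|C)=\rho(|\Delta|\,C)$, using that $C|\Delta|C$ is positive semidefinite and shares its nonzero spectrum with $|\Delta|\,C$. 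This is the one place where the phases of the $z_j$ are spent: the triangle inequality is the only step where passing from $z_j$ to $|z_j|$ is invoked.

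To lift the base lemma to all $k$ I would use multiplicativity of compounds (Cauchy--Binet), $C_k(DP)=C_k(D)C_k(P)$. Here $C_k(D)$ is again diagonal, with entries $\prod_{l\in S}z_l$ over $k$-subsets $S$, and $C_k(P)$ is again an orthogonal projection, since $C_k(P)^2=C_k(P^2)=C_k(P)$ and $C_k(P)^{*}=C_k(P^{*})=C_k(P)$. Moreover the entrywise modulus of $C_k(D)$ is exactly $C_k(D_a)$, because $\left|\prod_{l\in S}z_l\right|=\prod_{l\in S}|z_l|$. Applying the base lemma with $\Delta=C_k(D)$ and $C=C_k(P)$ therefore gives
\[
\rho(C_k(M))=\rho\!\left(C_k(D)C_k(P)\right)\le\rho\!\left(C_k(D_a)C_k(P)\right)=\rho(C_k(M_a)),
\]
which is precisely the reduced inequality, hence the theorem.

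The main obstacle is that the obvious route is too weak and points the wrong way. By Weyl's inequality $\prod_{j=1}^{k}|w_j|\le\prod_{j=1}^{k}\sigma_j(M)$, but a direct computation shows that $M$ and $M_a$ have \emph{identical} singular values (both are governed by $D^{*}D=D_a^2$ and $P$), while $\prod_{j=1}^{k}\sigma_j(M_a)\ge\prod_{j=1}^{k}v_j$ again by Weyl; chaining these yields the reverse of what is needed. The substance of the argument is thus to compare the \emph{eigenvalue} products directly, which the compound-matrix reduction together with the compression/triangle-inequality lemma accomplishes. A minor point to handle is the degenerate case where some $z_j=0$, so that $D_a$ is singular: the compression argument uses no inverse of $D_a$ and goes through verbatim, but one may alternatively reduce to the generic case by a continuity argument.
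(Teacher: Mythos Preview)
Your argument is correct, but it takes a genuinely different route from the paper's. The paper (in its proof of the generalisation, Theorem~\ref{thm4}, which specialises to the present statement when every $\gamma_j=1/n$) does \emph{not} work with $M=DP$ directly; instead it passes to the similar matrix $A=D^{1/2}PD^{1/2}$, where $D^{1/2}=\diag(\sqrt{z_1},\dots,\sqrt{z_n})$. The point is that $D^{1/2}(D^{1/2})^{*}=|D|$, so a short computation gives $|A|=(D^{1/2})^{*}PD^{1/2}$, which in turn has the same nonzero spectrum as $|D|P=M_a$. Hence the singular values of $A$ are exactly $v_1,\dots,v_{n-1},0$, and a single application of Weyl's inequality (Lemma~\ref{le2}) to $A$ finishes the proof. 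In other words, your observation that ``Weyl points the wrong way'' is true for $M=DP$ but not for the conjugated form $D^{1/2}PD^{1/2}$; the paper's key idea is precisely this square-root similarity that turns the $v_j$ into singular values. Your approach trades that trick for compound matrices plus the compression/triangle-inequality lemma $\rho(\Delta C)\le\rho(|\Delta|\,C)$, which is a clean self-contained argument and has the minor advantage of never choosing complex square roots; the paper's proof is shorter and shows that Weyl's inequality alone, applied to the right matrix, already carries the full strength of the result.
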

We also have the similar result on  linear convex combinations of incomplete polynomials.
\begin{thm}\label{thm4}
		Let $n\ge 2$ and $z_1,z_2,\ldots,z_n\in \mathbb{C}$. Let $A_n(z)=\prod\limits_{j=1}^n(z-z_j)$ and $B_n(z)=\prod\limits_{j=1}^{n}(z-\left|z_j\right|)$. Let $w_1,w_2,\ldots,w_{n-1}$ denote the zeros of $A_n^\gamma(z)$ listed in descending order of modulus and $v_1,v_2,\ldots,v_{n-1}$ denote the zeros of $B_n^\gamma(z)$ listed in descending order. Then 
	\[
	\prod_{j=1}^k\left|w_j\right|\le\prod_{j=1}^k\left|v_j\right|, \text{ for } 1\le k\le n-1.
	\]
\end{thm}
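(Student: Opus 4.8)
The plan is to build the $D$-companion matrix of the original polynomial and read off the zeros of $A_n^\gamma(z)$ as the eigenvalues of a suitable compression (this is the mechanism that presumably drives the proofs of Theorems \ref{thm1} and \ref{thm3}). Concretely, fix a vector $u=(\sqrt{\gamma_1},\dots,\sqrt{\gamma_n})^{\mathsf T}$, which is a unit vector since $\sum_k\gamma_k=1$. Let $D=\diag(z_1,\dots,z_n)$ and let $D_a=\diag(|z_1|,\dots,|z_n|)$. The key structural fact to establish (or cite, if it is the content of the earlier proofs) is that the $n-1$ nonzero eigenvalues of $P\,D\,P$, where $P=I-uu^{\mathsf T}$ is the orthogonal projection onto $u^\perp$, are exactly the zeros $w_1,\dots,w_{n-1}$ of $A_n^\gamma(z)$, counted with multiplicity; likewise the zeros $v_1,\dots,v_{n-1}$ of $B_n^\gamma(z)$ are the nonzero eigenvalues of $P\,D_a\,P$. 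Equivalently, working on the $(n-1)$-dimensional space $u^\perp$, the $w_j$ are the eigenvalues of the compression $M:=PDP|_{u^\perp}$ and the $v_j$ are the eigenvalues of $M_a:=PD_aP|_{u^\perp}$.

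Once the problem is recast this way, the target inequality $\prod_{j=1}^k|w_j|\le\prod_{j=1}^k|v_j|$ for all $k$ is a statement comparing the ordered moduli of the eigenvalues of two $(n-1)\times(n-1)$ matrices $M$ and $M_a$. The natural tool is the multiplicative Weyl-type majorization between eigenvalue moduli and singular values: for any square matrix $X$ with eigenvalues ordered by modulus $|\lambda_1(X)|\ge\cdots$ and singular values $s_1(X)\ge\cdots$, one has $\prod_{j=1}^k|\lambda_j(X)|\le\prod_{j=1}^k s_j(X)$, with equality at $k$ equal to the size when counting all of them (here one must be slightly careful because $M$ is $(n-1)\times(n-1)$ but may be singular, so the chain $w_n:=0$ is consistent). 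Thus it suffices to show $\prod_{j=1}^k s_j(M)\le\prod_{j=1}^k s_j(M_a)$ for every $k$, i.e. the singular values of $M_a$ weakly-log-majorize those of $M$. Since $M_a$ arises from $M$ by replacing each $z_j$ with $|z_j|$ inside the same projection sandwich, I would try to exhibit a relation of the form $M = U_1 M_a U_2$ or, more realistically, compare $M^*M$ with $M_a^*M_a$ through $PD^*PDP$ versus $PD_aPD_aP$; note $D^*D=D_a^*D_a=\diag(|z_j|^2)$, so the difference is entirely in how the off-diagonal projection interacts with the phases of the $z_j$.

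The main obstacle, and the heart of the argument, is precisely this singular-value comparison: $PD_aP$ is the "phase-stripped" version of $PDP$, and one needs the inequality $s_j(PDP)\le s_j(PD_aP)$ in the weak-log-majorization sense. I expect the cleanest route is to write $D = \Theta D_a$ with $\Theta=\diag(e^{i\arg z_j})$ a unitary diagonal, so $PDP = P\Theta D_a P$, and then to compare the Gram matrices: $(PDP)^*(PDP) = P D_a \Theta^* P \Theta D_a P$ while $(PD_aP)^*(PD_aP)=PD_a P D_a P$. The point is that the inner factor $\Theta^* P \Theta = I - (\Theta^*u)(\Theta^*u)^{\mathsf T}$ is again a projection of the same rank, but onto a different hyperplane; intuitively, replacing $P$ by a rotated projection can only shrink the compressed operator in the majorization order because of a pinching/averaging effect. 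I would make this precise using either (i) Ky Fan's maximum principle together with the fact that $|\langle \Theta D_a P x, \Theta D_a P y\rangle|\le \langle D_a P x, D_a P |y|\rangle$-type pointwise phase estimates, or (ii) an explicit subunitary intertwiner $W$ with $PDP = P D_a P\cdot W$ built from the polar parts. After the singular-value inequality is in hand, the rest is assembling: apply multiplicative Weyl to pass from $|w_j|$ to $s_j(M)$, chain through $s_j(M)\le s_j(M_a)$ in the log-majorization sense (using that weak-log-majorization of singular values of $M_a$ over $M$ is exactly $\prod_{j=1}^k s_j(M)\le\prod_{j=1}^k s_j(M_a)$), and then use that $B_n(z)=\prod_j(z-|z_j|)$ has all real nonnegative zeros so that its $\gamma$-combination $B_n^\gamma(z)$ has only real nonnegative zeros (by Theorem \ref{thm1} applied with real data, the convex hull being an interval on the nonnegative axis), whence $v_j = s_j(M_a)$ and the bound on $\prod|v_j|$ becomes an equality-type statement that closes the chain: $\prod_{j=1}^k|w_j| \le \prod_{j=1}^k s_j(M) \le \prod_{j=1}^k s_j(M_a) = \prod_{j=1}^k|v_j|$.
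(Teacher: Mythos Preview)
Your setup is essentially right: with $P=I-uu^{\mathsf T}=I-\Lambda^{1/2}J\Lambda^{1/2}$ an orthogonal projection, the matrices $DP$ and $|D|P$ have eigenvalue multisets $\{w_1,\dots,w_{n-1},0\}$ and $\{v_1,\dots,v_{n-1},0\}$ respectively, and passing to the compressions $M=V^*DV$, $M_a=V^*|D|V$ (with $V$ an isometry onto $u^\perp$) is fine.  You also correctly identify that $M_a\ge 0$, so $s_j(M_a)=v_j$.

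The gap is exactly where you flag it: the ``main obstacle'' $\prod_{j\le k}s_j(M)\le\prod_{j\le k}v_j$ is not established, and neither of your two suggested routes is convincing.  Route (ii), a contraction $W$ with $M=M_aW$, would force the \emph{pointwise} inequality $s_j(M)\le v_j$, equivalently $M^*M\le M_a^2$, i.e.\ $V^*D^*PDV\le (V^*|D|V)^2$.  But operator convexity of $t\mapsto t^2$ gives $(V^*|D|V)^2\le V^*|D|^2V$, the reverse direction, so there is no reason to expect such a $W$.  Route (i) as stated (``pointwise phase estimates'') is too vague to evaluate.  Your intermediate inequality \emph{is} true, but the clean tool is Horn's multiplicative inequality $\prod_{j\le k}s_j(AB)\le\prod_{j\le k}s_j(A)s_j(B)$ applied to $M=(V^*D^{1/2})(D^{1/2}V)$, since $s_j(V^*D^{1/2})^2=\lambda_j(V^*|D|V)=v_j=s_j(D^{1/2}V)^2$.

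The paper avoids this detour altogether by a smarter choice of matrix.  Instead of $PDP$ (or $M$), it takes
\[
A=D^{1/2}(I-\Lambda J)D^{1/2}\quad\bigl(\text{equivalently, }D^{1/2}PD^{1/2}\text{ up to similarity}\bigr),
\]
which is similar to $D(I-\Lambda J)$ and hence has eigenvalues $\{w_1,\dots,w_{n-1},0\}$.  The point is that the \emph{singular values} of $A$ are exactly $\{v_1,\dots,v_{n-1},0\}$: indeed $(D^{1/2})^*D^{1/2}=|D|$, so $A^*A$ is similar to $(|D|^{1/2}P|D|^{1/2})^2$, whose eigenvalues are the squares of those of $|D|^{1/2}P|D|^{1/2}\sim |D|P$.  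One application of Weyl's inequality (Lemma~\ref{le2}) then gives $\prod_{j\le k}|w_j|\le\prod_{j\le k}v_j$ directly, with no intermediate singular-value comparison needed.  The conceptual difference: you compare two companion matrices and must bridge their singular values; the paper builds a \emph{single} matrix whose eigenvalues encode the $w_j$ and whose singular values already encode the $v_j$.
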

Before proceeding, let us fix some notation. Let $A$ be an $n\times n$ complex matrix. The spectral radius of $A$ is denoted by $\rho(A):=\max\limits_{1\le j\le n}\{\left|\lambda_{j}\right|\}$, where $\lambda_{1},\ldots,\lambda_n$ are all eigenvalues of $A$. We use $s_j(A)$ to denote the $j$th largest singular values of $A$. The transpose of $A$ is denoted by $A^T$. The conjugate transpose of $A$ is denoted by $A^*$ and $\left|A\right|:=(A^*A)^{\frac{1}{2}}$. If all the eigenvalues of $A$ are nonnegative, we use $\lambda_{j}(A)$ to denote the $j$th largest eigenvalue of $A$. We use diag $(d_1,d_2,\ldots,d_n)$ to denote a diagonal matrix with main diagonal entries $d_1,d_2,\ldots,d_n$. Let $A,B$ be two Hermitian matrices. Then $A\ge B$ means $A-B$ is positive semidefinite.
\section{proof of Theorem \ref{thm1} }
\emph{Proof of Theorem \ref{thm1}: } Clearly,
\begin{equation*}
	\dfrac{A_n^\gamma(z)}{A_n(z)}=\sum\limits_{j=1}^n\dfrac{\gamma_j}{z-z_j}=\sum\limits_{j=1}^n\dfrac{\gamma_j(\bar{z}-\bar{z_j})}{\left|z-z_j\right|^2}.
\end{equation*}
Let $w$ be any zero of $A_n^{\gamma}(z)$ but not be a zero of $A_n(z)$. Then
\begin{equation}\label{e4}
	\sum\limits_{j=1}^n\dfrac{\gamma_j(\bar{w}-\bar{z_j})}{\left|w-z_j\right|^2}=0
\end{equation}
Rearranging (\ref{e4}) gives
\begin{equation}\label{e5}
	\sum\limits_{j=1}^n\dfrac{\gamma_j\bar{w}}{\left|w-z_j\right|^2}=\sum\limits_{j=1}^n\dfrac{\gamma_j\bar{z_j}}{\left|w-z_j\right|^2}.
\end{equation}
Taking conjugates into (\ref{e5}), we have
\[
w\left\lbrace\sum\limits_{j=1}^n\dfrac{\gamma_j}{\left|w-z_j\right|} \right\rbrace =\sum\limits_{j=1}^n\dfrac{\gamma_jz_j}{\left|w-z_j\right|^2}.
\]
Let $\beta_j=\dfrac{\tfrac{\gamma_j}{\left|w-z_j\right|^2}}{\sum\limits_{j=1}^n\tfrac{\gamma_j}{\left|w-z_j\right|}}$. Then $\sum\limits_{j=1}^n\beta_j=1$ and $w=\sum\limits_{j=1}^n\beta_jz_j.$ If $w=z_k$ for some $k$, the result is obvious. \hfill $\square$
\section{Proof of Theorem \ref{thm3} and Theorem \ref{thm4}}
We need the following lemmas.
\begin{lem}\cite[Theorem 1.2]{CN10}\label{le1}
	Let $A$ be an $n\times n$ matrix with characteristic polynomial $p(z)=\prod\limits_{j=1}^n(z-z_j)$ and $q(z)$ be a monic polynomial of degree $n-1$ given by
	\[
	\dfrac{q(z)}{p(z)}=\sum\limits_{j=1}^n\dfrac{\lambda_j}{z-z_j}.
	\]
	Then  the characteristic polynomial of the matrix $D(I-\Lambda J)$ is $zq(z)$, where $D=\diag (z_1,z_2,\ldots,z_n)$, $\Lambda=\diag(\lambda_1,\lambda_2,\cdots,\lambda_n)$ and $J$ is the $n\times n$ all one matrix.
\end{lem}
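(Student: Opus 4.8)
The plan is to prove Lemma~\ref{le1} by a short, direct determinant computation resting on the matrix determinant lemma. Since $D(I-\Lambda J)$ depends on $A$ only through its eigenvalues $z_1,\dots,z_n$, I would work directly with $D=\diag(z_1,\dots,z_n)$. The first observation is that $D\Lambda J$ is the rank-one matrix $u\,\mathbf{1}^{T}$, with $u=(z_1\lambda_1,z_2\lambda_2,\dots,z_n\lambda_n)^{T}$ and $\mathbf{1}=(1,1,\dots,1)^{T}$, so that
\[
D(I-\Lambda J)=D-u\,\mathbf{1}^{T}
\]
is a rank-one modification of the diagonal matrix $D$. I would also record that, since $q$ is monic of degree $n-1$ while $q(z)/p(z)=\sum_{j}\lambda_j/(z-z_j)$, comparing coefficients of $z^{n-1}$ forces $\sum_{j=1}^{n}\lambda_j=1$; this normalization is used below.

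Next, for every $z$ outside the spectrum $\{z_1,\dots,z_n\}$ of $D$, the matrix $zI-D$ is invertible with inverse $\diag\big(\tfrac1{z-z_1},\dots,\tfrac1{z-z_n}\big)$, so the matrix determinant lemma gives
\[
\det\big(zI-D(I-\Lambda J)\big)=\det\big((zI-D)+u\,\mathbf{1}^{T}\big)=\det(zI-D)\,\Big(1+\mathbf{1}^{T}(zI-D)^{-1}u\Big).
\]
Here $\det(zI-D)=p(z)$ and $\mathbf{1}^{T}(zI-D)^{-1}u=\sum_{j=1}^{n}\tfrac{z_j\lambda_j}{z-z_j}$. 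To simplify this sum I would write $\tfrac{z_j\lambda_j}{z-z_j}=\tfrac{z\lambda_j}{z-z_j}-\lambda_j$ and sum over $j$, obtaining $\sum_{j}\tfrac{z_j\lambda_j}{z-z_j}=z\sum_{j}\tfrac{\lambda_j}{z-z_j}-\sum_{j}\lambda_j=z\,\tfrac{q(z)}{p(z)}-1$, using $\sum_j\lambda_j=1$ and the defining relation for $q$. Hence the scalar factor equals $z\,q(z)/p(z)$, and
\[
\det\big(zI-D(I-\Lambda J)\big)=p(z)\cdot\frac{z\,q(z)}{p(z)}=z\,q(z)
\]
for all $z\notin\{z_1,\dots,z_n\}$. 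Both sides being polynomials in $z$, the identity holds for every $z\in\mathbb{C}$, which is exactly the claim.

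I do not expect a genuine obstacle here: the whole argument is an identity of rational functions, and it is insensitive to whether the $z_j$ are distinct, since $\sum_{j}\tfrac{z_j\lambda_j}{z-z_j}=z\,q(z)/p(z)-1$ holds in $\mathbb{C}(z)$ regardless. The only points worth a word of care are the normalization $\sum_j\lambda_j=1$ read off from monicity of $q$ (this is what makes the two constant terms cancel), and the final polynomial continuation from $z\notin\{z_1,\dots,z_n\}$ to all $z$. If one wishes to avoid invoking the matrix determinant lemma, the same identity follows by expanding $\det(zI-D+u\,\mathbf{1}^{T})$ by multilinearity in its rank-one term, or by taking Schur complements in the bordered matrix $\left(\begin{smallmatrix}zI-D & u\\ \mathbf{1}^{T} & 1\end{smallmatrix}\right)$.
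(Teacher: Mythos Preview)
Your argument is correct. The paper does not actually supply a proof of this lemma---it is quoted from \cite{CN10} without proof---so there is no in-paper argument to compare against directly. That said, your method (writing $J=\mathbf{1}\mathbf{1}^{T}$, recognizing the rank-one perturbation, and invoking the matrix determinant lemma together with the normalization $\sum_j\lambda_j=1$ forced by monicity of $q$) is precisely the technique the paper itself deploys in its proof of Theorem~\ref{thm5}, which is the $(n-1)\times(n-1)$ variant of the same identity. So your proof is both valid and entirely in keeping with the paper's own approach to the closely related result it does prove.
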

\begin{defin}
	The matrix $D(I-\Lambda J)$ in Lemma \ref{le1} is called a $D-$companion matrix of $zq(z)$.
\end{defin}
\begin{lem}\cite[p. 43]{Bha97}\label{le2}
	Let $A$ be an $n\times n$ matrix with singular values $s_1\ge s_2\ge \ldots\ge s_n$ and eigenvalues $\lambda_1,\lambda_2,\ldots,\lambda_n$ arranged in such a way that $\left|\lambda_1\right|\ge \left|\lambda_2\right|\ge \ldots\ge \left|\lambda_n\right|$. Then
	\[
	\prod_{j=1}^{k}\left|\lambda_j\right|\le \prod_{j=1}^ks_j, \text{ for }1\le k\le n.
	\]
\end{lem}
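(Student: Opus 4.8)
The plan is to reduce the inequality for general $k$ to the case $k=1$, where it reads $\rho(B)\le s_1(B)$: this is immediate, since an eigenvector $Bx=\lambda x$ with $x\neq 0$ gives $|\lambda|\,\|x\|=\|Bx\|\le s_1(B)\,\|x\|$. So the whole task is to exhibit, for each $k$, a single matrix whose spectral radius is $\prod_{j=1}^{k}|\lambda_j|$ and whose largest singular value is $\prod_{j=1}^{k}s_j$.

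That matrix is the $k$-th compound (antisymmetric tensor power) $C_k(A)$, the $\binom{n}{k}\times\binom{n}{k}$ matrix induced by $A$ on $\bigwedge^{k}\mathbb{C}^{n}$. First I would record the functorial properties $C_k(XY)=C_k(X)\,C_k(Y)$, $C_k(X^{*})=C_k(X)^{*}$, $C_k(U)$ unitary for $U$ unitary, and the fact that $C_k$ sends an upper triangular matrix to an upper triangular matrix (in, say, the lexicographic order on $k$-subsets) whose diagonal entries are the $k$-fold products of the original diagonal entries. Applying this to a Schur triangularization $A=UTU^{*}$ with $\mathrm{diag}(T)=(\lambda_1,\dots,\lambda_n)$ arranged so that $|\lambda_1|\ge\cdots\ge|\lambda_n|$ shows that the eigenvalues of $C_k(A)$ are exactly the products $\prod_{i\in S}\lambda_i$ over $k$-subsets $S$, and hence $\rho(C_k(A))=\prod_{j=1}^{k}|\lambda_j|$. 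Applying the same identities to $C_k(A)^{*}C_k(A)=C_k(A^{*}A)=C_k(|A|^{2})$, together with the observation that $C_k$ turns the positive diagonal matrix of the $s_j^{2}$ into the positive diagonal matrix of their $k$-fold products, shows that the singular values of $C_k(A)$ are exactly $\prod_{i\in S}s_i$, and hence $s_1(C_k(A))=\|C_k(A)\|=\prod_{j=1}^{k}s_j$.

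Putting these together with the $k=1$ estimate applied to $B=C_k(A)$ yields
\[
\prod_{j=1}^{k}|\lambda_j|=\rho\!\left(C_k(A)\right)\le s_1\!\left(C_k(A)\right)=\prod_{j=1}^{k}s_j,\qquad 1\le k\le n,
\]
which is the assertion. The part I expect to cost the most care is the bookkeeping around $C_k$: proving $C_k(XY)=C_k(X)C_k(Y)$ and $C_k(X^{*})=C_k(X)^{*}$, and checking that $C_k$ of a triangular (resp.\ positive diagonal) matrix is triangular (resp.\ positive diagonal) with the claimed diagonal; once these are in hand the result is two lines. A compound-free alternative is to take the Schur form $A=UTU^{*}$, let $T_{[k]}$ be its leading $k\times k$ principal submatrix (upper triangular, with $|\det T_{[k]}|=\prod_{j=1}^{k}|\lambda_j|=\prod_{j=1}^{k}s_j(T_{[k]})$), and then invoke the interlacing-type fact that a submatrix has no larger singular values than the full matrix, so that $s_j(T_{[k]})\le s_j(T)=s_j(A)$; there the obstacle is simply relocated onto that submatrix bound.
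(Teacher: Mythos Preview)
Your argument is correct: the reduction to $\rho(B)\le s_1(B)$ via the $k$-th antisymmetric tensor power (compound matrix) $C_k(A)$, together with the functorial properties you list, is exactly the standard proof of Weyl's majorant theorem. The paper does not supply its own proof of this lemma---it simply quotes it from \cite[p.~43]{Bha97}---and the proof given there is precisely the antisymmetric-tensor-power argument you outline, so your proposal agrees with the cited source.
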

\emph{Proof of Theorem \ref{thm3}: }
Let  $w_1,w_2,\ldots,w_{n-1}$ be the zeros of $A_n^\gamma(z)$.  By using Lemma \ref{le1}, we know the characteristic polynomial of the matrix $D(I-\Lambda J)$ is $zA_n^\gamma(z)$, where $D=\diag (z_1,z_2,\ldots,z_n),\Lambda=\diag(\gamma_1,\gamma_2,\cdots,\gamma_n)$ and $J$ is the $n\times n$ all one matrix. 

It's easy to see that the eigenvalues of
$D(I-\Lambda J)$ concide with the eigenvalues of $D(I-\Lambda^{\frac{1}{2}}J\Lambda^{\frac{1}{2}})$.

Since $I-\Lambda^{\frac{1}{2}}J\Lambda^{\frac{1}{2}}$ is a contraction, $\left|(I-\Lambda^{\frac{1}{2}}J\Lambda^{\frac{1}{2}})D^*\right|^2=D(I-\Lambda^{\frac{1}{2}}J\Lambda^{\frac{1}{2}})^2D^*\le DD^*=\left|D^*\right|^2$. The L\"owner-Heinz inequality (see \cite[p. 115]{Bha97}) yields $\left|(I-\Lambda^{\frac{1}{2}}J\Lambda^{\frac{1}{2}})D^*
\right|$ $\le \left|D^*\right|$. By using Weyl's inequalities (see \cite[p. 62]{Bha97}), we get
\begin{equation}\label{e6}
	\lambda_j(\left|(I-\Lambda^{\frac{1}{2}}J\Lambda^{\frac{1}{2}})D^*
	\right|)\le \lambda_j(\left|D\right|).
\end{equation}
By using Lemma \ref{le2} and (\ref{e6}), we have
\[
\begin{aligned}
	\prod_{j=1}^{k}\left|\lambda_j(D(I-\Lambda J))\right|&= \prod_{j=1}^k\left|\lambda_j(D(I-\Lambda^{\frac{1}{2}}J\Lambda^{\frac{1}{2}}))\right|\\
	&\le \prod_{j=1}^k\lambda_j(\left| D(I-\Lambda^{\frac{1}{2}}J\Lambda^{\frac{1}{2}})\right|) \\
	&=\prod_{j=1}^k\lambda_j(\left|(I-\Lambda^{\frac{1}{2}}J\Lambda^{\frac{1}{2}})D^*\right|)\\
	&\le \prod_{j=1}^k\lambda_j(\left|D\right|), \text{ for }1\le k\le n.
\end{aligned}
\]
That is, $\prod\limits_{j=1}^k\left|w_j\right|\le\prod\limits_{j=1}^k\left|z_j\right|, \text{ for } 1\le k\le n. $ \hfill$\square$

\emph{Proof of Theorem \ref{thm4}: } Let $D^{\frac{1}{2}}=\diag(\sqrt{z_1},\sqrt{z_2},\ldots,\sqrt{z_n})$. (We choose the square root which is either a non-negative real number or a complex number with positive imaginary part.) Let $A=D^{\frac{1}{2}}(I-\Lambda J)D^{\frac{1}{2}}, A$ is similar to $B=D(I-\Lambda J)$ so its eigenvalues are $\{w_1,w_2,\ldots,w_{n-1},0\}$. Similarly, $\left|A\right|=(D^{\frac{1}{2}})^*(I-\Lambda J)D^{\frac{1}{2}}$ is similar to $\left|D\right|(I-\Lambda J)$ which has eigenvalues $\{v_1,v_2,\ldots,v_{n-1},0\}$. Thus, the singular values of $A$ are $ \{v_1,v_2,\ldots,v_{n-1},0 \}$. Theorem \ref{thm4} now follows from Lemma \ref{le2}.    \hfill$\square$
\section{Some majorization results}
\begin{lem}\cite[p. 42]{Bha97}
	Let $\phi$ be a function such that $\phi(e^t)$ is convex and monotone increasing in $t$. Suppose that $x=(x_1,\ldots,x_n),y=(y_1,\ldots,y_n)\in \mathbb{R}^{n}_+$ and 
	\[
	\prod_{j=1}^{k}x_j\le\prod_{j=1}^{k}y_j,  \text{ for }1\le k\le n.
	\]
	Then
	\[
	\sum_{j=1}^{k}\phi(x_j)\le \sum_{j=1}^{k}\phi(y_j),  \text{ for }1\le k\le n.
	\]
\end{lem}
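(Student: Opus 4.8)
\emph{Proof proposal.} The plan is to convert the multiplicative hypothesis into an additive one by taking logarithms, and then to compare $\sum\phi(x_l)$ with $\sum\phi(y_l)$ by testing the convex map $g(t):=\phi(e^{t})$ against its own subgradients; the nonnegativity of the resulting error will come out of a one-line Abel summation. As is implicit in the statement (and as holds in the applications, e.g.\ Theorems \ref{thm3} and \ref{thm4}, where the $w_j$ and $z_j$ are listed in descending order of modulus), I assume the entries are indexed in nonincreasing order, $x_1\ge\cdots\ge x_n\ge 0$ and $y_1\ge\cdots\ge y_n\ge 0$, and I first handle the case of strictly positive entries. Setting $\alpha_l:=\log x_l$ and $\beta_l:=\log y_l$, we get $\alpha_1\ge\cdots\ge\alpha_n$, $\beta_1\ge\cdots\ge\beta_n$, and, taking logarithms in $\prod_{l=1}^{j}x_l\le\prod_{l=1}^{j}y_l$,
\[
D_j:=\sum_{l=1}^{j}(\beta_l-\alpha_l)\ \ge\ 0,\qquad 1\le j\le n,\qquad D_0:=0.
\]
The function $g=\phi\circ\exp$ is, by hypothesis, convex and nondecreasing on $\mathbb{R}$, hence finite, continuous, and subdifferentiable at every point.

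I will use two elementary properties of $g$. First, since $g$ is nondecreasing, every subgradient of $g$ is $\ge 0$. Second, since $g$ is convex, its subdifferential is a monotone (nondecreasing) set-valued map, so — choosing, at any repeated value among the $\alpha_l$, one and the same subgradient — we may pick $c_l\in\partial g(\alpha_l)$ with $c_1\ge c_2\ge\cdots\ge c_n\ge 0$. Now fix $k\in\{1,\dots,n\}$. The subgradient inequality $g(\beta_l)\ge g(\alpha_l)+c_l(\beta_l-\alpha_l)$, valid for arbitrary real $\beta_l$, gives upon summation
\[
\sum_{l=1}^{k}\phi(y_l)-\sum_{l=1}^{k}\phi(x_l)=\sum_{l=1}^{k}\bigl(g(\beta_l)-g(\alpha_l)\bigr)\ \ge\ \sum_{l=1}^{k}c_l(\beta_l-\alpha_l),
\]
and summation by parts turns the right-hand sum into
\[
\sum_{l=1}^{k}c_l(D_l-D_{l-1})=\sum_{j=1}^{k-1}D_j\,(c_j-c_{j+1})+D_k\,c_k\ \ge\ 0,
\]
each of $D_j$, $c_j-c_{j+1}$ and $c_k$ being nonnegative. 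Combining the two displays yields $\sum_{l=1}^{k}\phi(x_l)\le\sum_{l=1}^{k}\phi(y_l)$ for every $k$, as claimed. The boundary case in which some entries of $x$ (and possibly of $y$) vanish — exactly the situation $w_n:=0$ in Theorems \ref{thm3} and \ref{thm4} — will follow by a routine limiting argument: perturb the zero entries of $x$ to $\varepsilon>0$, re-sort, note that the hypothesis survives for all small $\varepsilon$, apply the case already settled, and let $\varepsilon\downarrow 0$, reading $\phi(0)$ as $\lim_{t\to-\infty}\phi(e^{t})\in[-\infty,+\infty)$ (the inequality being vacuous if this limit is $-\infty$).

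I do not expect a genuine obstacle: once the statement is put in logarithmic form, it is precisely the standard fact that weak sub-majorization in $\mathbb{R}^{k}$ is preserved by convex nondecreasing maps, and the Abel-summation identity above is the shortest route to it. The only step that needs a little care is the selection of a nonincreasing, nonnegative sequence of subgradients $c_l$ aligned with the ordering of the $\alpha_l$; everything else is bookkeeping. If one preferred to quote rather than compute, one could instead observe that $(\alpha_1,\dots,\alpha_k)$ is weakly sub-majorized by $(\beta_1,\dots,\beta_k)$ and invoke the classical stability of weak sub-majorization under $g$ — via an intermediate vector $\tilde\alpha$ with $\alpha_l\le\tilde\alpha_l$ and $\tilde\alpha\prec(\beta_1,\dots,\beta_k)$, followed by Hardy--Littlewood--P\'olya.
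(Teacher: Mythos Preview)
The paper does not prove this lemma: it is quoted verbatim from Bhatia's \emph{Matrix Analysis} as a known result and used as a black box, so there is no ``paper's own proof'' to compare your argument against. Your proposal is correct and is precisely the standard argument behind that result: pass to logarithms to turn weak log-majorization into weak sub-majorization, then use the subgradient inequality for the convex nondecreasing map $g=\phi\circ\exp$ together with Abel summation to conclude. The one assumption you add explicitly---that the entries are listed in nonincreasing order---is indeed implicit in Bhatia's formulation (and in how the lemma is applied here to the $w_j$ and $z_j$), and your Abel step only actually requires the $x_j$ to be ordered, since the monotonicity of the subgradients $c_l$ is all that is used. The limiting argument for zero entries is routine, though note that if some $y_j=0$ one must perturb $y$ as well; in the paper's applications only the appended $w_n:=0$ vanishes, so this edge case does not arise.
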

So, it immediately follows from \ref{thm3} and \ref{thm4} that
\begin{thm}
	Let $\phi$ be a function such that $\phi(e^t)$ is convex and monotone increasing in $t$. Let $z_1,z_2,\ldots,z_{n}$ denote the zeros of $A_n(z)$ listed in descending order of modulus and $w_1,w_2,\ldots,w_{n-1}$ denote the zeros of $A^\gamma_n(z)$ listed in descending order  of modulus, $w_n:=0$. Then
	\[
	\sum_{j=1}^{k}\phi(\left|w_j\right|)\le \sum_{j=1}^{k}\phi(\left|z_j\right|),  \text{ for }1\le k\le n.
	\]
Specially, for $\phi(t)=t^p, p\ge 1$, we have
	\[
\sum_{j=1}^{k}\left|w_j\right|^p\le \sum_{j=1}^{k}\left|z_j\right|^p , \text{ for }1\le k\le n.
\]
\end{thm}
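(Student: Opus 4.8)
The plan is to combine the two preceding ingredients directly: the multiplicative majorization of moduli from Theorem~\ref{thm3} (respectively Theorem~\ref{thm4}) and the lemma just stated, which upgrades a log-majorization into an additive majorization under any $\phi$ with $\phi(e^t)$ convex and increasing. Concretely, I would first recall that by Theorem~\ref{thm3}, writing $x_j=|w_j|$ and $y_j=|z_j|$ for $1\le j\le n$ (with the convention $w_n:=0$), we have $x,y\in\mathbb{R}^n_+$ and $\prod_{j=1}^k x_j\le\prod_{j=1}^k y_j$ for all $1\le k\le n$. This is exactly the hypothesis of the lemma, so applying it gives $\sum_{j=1}^k\phi(|w_j|)\le\sum_{j=1}^k\phi(|z_j|)$ for $1\le k\le n$, which is the first displayed conclusion.

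For the specialization, the only thing to verify is that $\phi(t)=t^p$ with $p\ge 1$ satisfies the hypothesis of the lemma, i.e.\ that $t\mapsto\phi(e^t)=e^{pt}$ is convex and monotone increasing in $t$. Both are immediate: $e^{pt}$ has second derivative $p^2e^{pt}>0$ and first derivative $pe^{pt}>0$ since $p\ge 1>0$. Hence the general statement applies with this choice of $\phi$, yielding $\sum_{j=1}^k|w_j|^p\le\sum_{j=1}^k|z_j|^p$ for $1\le k\le n$. I would note in passing that the case $p=1$ recovers the ordinary additive majorization of the moduli, while $k=n$ with general $p$ gives a comparison of the $\ell^p$-type symmetric functions.

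There is essentially no obstacle here; the statement is a formal corollary of Theorem~\ref{thm3} and the lemma, and the ``proof'' is just the verification that the hypotheses line up. The only point requiring a word of care is the bookkeeping of the zero entry: the index range runs to $n$, the vectors live in $\mathbb{R}^n_+$, and one must include $w_n:=0$ among the $x_j$ so that the two lists have equal length $n$ and the $k=n$ inequality makes sense — but this is already built into the statement of Theorem~\ref{thm3}. Since $\phi$ need not satisfy $\phi(0)=0$ in general, the inclusion of the zero term is harmless precisely because it is present on both sides (as $x_n=0\le y_n$ is consistent with the log-majorization, interpreting $0\le 0$ trivially when $z_n=0$, and otherwise $x_n=0<y_n$).

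\emph{Proof.} By Theorem~\ref{thm3}, the numbers $|w_1|,\ldots,|w_{n-1}|,|w_n|$ with $w_n:=0$ and $|z_1|,\ldots,|z_n|$ lie in $\mathbb{R}^n_+$ and satisfy $\prod_{j=1}^k|w_j|\le\prod_{j=1}^k|z_j|$ for $1\le k\le n$. Applying the preceding lemma with $x_j=|w_j|$ and $y_j=|z_j|$ gives $\sum_{j=1}^k\phi(|w_j|)\le\sum_{j=1}^k\phi(|z_j|)$ for $1\le k\le n$. For the last assertion, take $\phi(t)=t^p$ with $p\ge 1$: then $\phi(e^t)=e^{pt}$, whose derivative $pe^{pt}$ is positive and whose second derivative $p^2e^{pt}$ is positive, so $\phi(e^t)$ is convex and monotone increasing in $t$, and the claimed inequality follows. \hfill$\square$
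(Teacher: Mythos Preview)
Your proof is correct and matches the paper's approach exactly: the paper simply states that the theorem ``immediately follows from'' Theorem~\ref{thm3} and the preceding lemma, and you have spelled out precisely this deduction together with the routine verification that $\phi(t)=t^p$ for $p\ge 1$ satisfies the convexity/monotonicity hypothesis.
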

\begin{thm}
	Let $\phi$ be a function such that $\phi(e^t)$ is convex and monotone increasing in $t$. Let $n\ge 2$ and $z_1,z_2,\ldots,z_n\in \mathbb{C}$. Let $A_n(z)=\prod\limits_{j=1}^n(z-z_j)$ and $B_n(z)=\prod\limits_{j=1}^{n}(z-\left|z_j\right|)$. Let $w_1,w_2,\ldots,w_{n-1}$ denote the zeros of $A_n^\gamma(z)$ listed in descending order of modulus and $v_1,v_2,\ldots,v_{n-1}$ denote the zeros of $B_n^\gamma(z)$ listed in descending order. Then
	\[
	\sum_{j=1}^{k}\phi(\left|w_j\right|)\le \sum_{j=1}^{k}\phi(\left|v_j\right|),  \text{ for }1\le k\le n-1.
	\]
	Specially, for $\phi(t)=t^p, p\ge 1$, we have
	\[
	\sum_{j=1}^{k}\left|w_j\right|^p\le \sum_{j=1}^{k}\left|v_j\right|^p , \text{ for }1\le k\le n-1.
	\]
\end{thm}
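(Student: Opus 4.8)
The plan is to obtain this theorem as an immediate corollary of Theorem \ref{thm4} together with the log-to-linear majorization lemma stated just above it (Bhatia, p.~42). No new machinery is needed; the whole content is a bookkeeping combination of two already-proved facts.

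First I would record that $A_n^\gamma(z)$ and $B_n^\gamma(z)$ are monic of degree $n-1$, so each has exactly $n-1$ zeros counted with multiplicity, namely $w_1,\dots,w_{n-1}$ and $v_1,\dots,v_{n-1}$ arranged in descending order of modulus. Applying Theorem \ref{thm4} gives the multiplicative majorization $\prod_{j=1}^{k}\left|w_j\right|\le\prod_{j=1}^{k}\left|v_j\right|$ for $1\le k\le n-1$. Next I would feed this into the lemma with ambient dimension $n-1$ in place of $n$, taking $x=(\left|w_1\right|,\dots,\left|w_{n-1}\right|)$ and $y=(\left|v_1\right|,\dots,\left|v_{n-1}\right|)$; both vectors lie in $\mathbb{R}^{n-1}_+$ and are already sorted in decreasing order as the lemma requires, and the hypotheses on $\phi$ are precisely those assumed here, so the lemma returns $\sum_{j=1}^{k}\phi(\left|w_j\right|)\le\sum_{j=1}^{k}\phi(\left|v_j\right|)$ for $1\le k\le n-1$. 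For the stated special case, observe that $\phi(t)=t^p$ with $p\ge 1$ satisfies $\phi(e^t)=e^{pt}$, which is convex and monotone increasing in $t$, so the general statement applies and yields $\sum_{j=1}^{k}\left|w_j\right|^p\le\sum_{j=1}^{k}\left|v_j\right|^p$.

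There is no real obstacle here, since everything follows from results established earlier in the paper. The only point meriting a word of care is the degenerate configuration in which some $z_j=0$: then $B_n^\gamma$, and hence (via the multiplicative majorization) $A_n^\gamma$, may have a zero at the origin, so $\phi$ must be evaluated at $0$. This is handled by extending $\phi$ to $[0,\infty)$ through its limit at $0^+$, which exists by monotonicity, or alternatively by a short continuity argument perturbing the $z_j$; for the power functions $\phi(t)=t^p$ even this is superfluous, as $0^p=0$. Hence the theorem follows. \hfill$\square$
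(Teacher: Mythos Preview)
Your proposal is correct and matches the paper's own approach exactly: the paper also derives this theorem as an immediate consequence of Theorem~\ref{thm4} together with the Bhatia majorization lemma, without further argument. Your extra remark on the degenerate case $z_j=0$ is a careful addition the paper omits, but it does not change the strategy.
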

\section{The zeros of convex combinations of incomplete polynomials}
 Notice that the derivative of $A_n(z)$:
$A_n'(z)=\sum\limits_{k=1}^ng_k(z)$ is an elementary symmetric polynomial of degree $n-1$, where $g_k(z)=\prod\limits_{j\neq k}^n(z-z_j)$. Gauss-Lucas Theorem \ref{GL} tell us that all the zeros of $A_n'(z)$ are contained in the closed convex hull $H(z_1,\ldots,z_n)$.
The $2$th derivative of $A_n(z)$: $A_n^{(2)}(z)=\sum\limits_{1\le i< j\le n}g_{ij}(z)$ is an elementary symmetric polynomial of degree $n-2$, where $g_{ij}(z)=\tfrac{\prod\limits_{k=1}^n(z-z_k)}{(z-z_i)(z-z_j)}$. Applying the Gauss-Lucas Theorem \ref{GL} to $A_n'(z)$, we know  all the zeros of $A_n^{(2)}(z)$ are also contained in the closed convex hull $H(z_1,\ldots,z_n)$. In general, for $1\le k\le n$, the $k$th derivative of $A_n(z)$: $A_n^{(k)}(z)=\sum\limits_{1\le i_1<i_2<\ldots<i_k\le n}g_{i_1i_2\ldots i_k}(z)$ is an elementary symmetric polynomial of degree $n-k$, where $g_{i_1i_2\ldots i_k}(z)=\tfrac{\prod\limits_{k=1}^n(z-z_k)}{(z-z_{i_1})(z-z_{i_2})\cdots(z-z_{i_k})}$. It's easy to get the following result.
\begin{thm}
	All the zeros of $A_n^{(k)}(z)$ are contained in the closed convex hull $H(z_1,\ldots,z_n)$.
\end{thm}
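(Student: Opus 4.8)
\emph{Proof proposal.} The plan is to prove the statement by induction on $k$, with the Gauss-Lucas Theorem \ref{GL} as the only real ingredient, together with the elementary fact that the closed convex hull $H(z_1,\ldots,z_n)$ is itself closed and convex. First I would dispose of the trivial range: for $k=n$ the polynomial $A_n^{(n)}(z)=n!$ is a nonzero constant with no zeros, so the claim is vacuous there; thus it suffices to treat $1\le k\le n-1$, where $A_n^{(k)}$ has degree $n-k\ge 1$. Since scaling a polynomial by a nonzero constant does not move its zeros, whenever I want to quote a result stated for monic polynomials I may replace $A_n^{(k)}(z)$ by its monic normalization $\widetilde A_n^{(k)}(z):=\frac{(n-k)!}{n!}A_n^{(k)}(z)$. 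The base case $k=1$ is then precisely Theorem \ref{GL} applied to $A_n(z)$: the zeros of $\widetilde A_n'(z)$ are the critical points of $A_n(z)$ and lie in $H(z_1,\ldots,z_n)$.

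For the inductive step, suppose $1\le k\le n-2$ and that every zero of $A_n^{(k)}(z)$ lies in $H(z_1,\ldots,z_n)$; call these zeros $\zeta_1,\ldots,\zeta_{n-k}$, so $A_n^{(k)}(z)=c\prod_{j=1}^{n-k}(z-\zeta_j)$ with $c\ne 0$. Because $A_n^{(k+1)}(z)=\bigl(A_n^{(k)}(z)\bigr)'$, applying Theorem \ref{GL} to the monic polynomial $\prod_{j=1}^{n-k}(z-\zeta_j)$ shows that every zero of $A_n^{(k+1)}(z)$ lies in the closed convex hull $H(\zeta_1,\ldots,\zeta_{n-k})$. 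By the induction hypothesis each $\zeta_j\in H(z_1,\ldots,z_n)$, and since $H(z_1,\ldots,z_n)$ is convex and closed we get $H(\zeta_1,\ldots,\zeta_{n-k})\subseteq H(z_1,\ldots,z_n)$, which closes the induction.

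I do not expect a genuine obstacle here: the only points requiring a word of care are the normalization to monic form before invoking Theorem \ref{GL}, and the observation that the convex hull of a subset of a convex set is contained in that set. As an alternative that keeps the argument self-contained within the present paper, one may at each step use Theorem \ref{thm1} in place of Gauss-Lucas: the monic normalization of $\bigl(\prod_{j=1}^{n-k}(z-\zeta_j)\bigr)'$ is exactly the convex combination $\frac{1}{n-k}\sum_{j=1}^{n-k}\prod_{l\ne j}(z-\zeta_l)$ of the incomplete polynomials associated with $\zeta_1,\ldots,\zeta_{n-k}$, so Theorem \ref{thm1} places its zeros in $H(\zeta_1,\ldots,\zeta_{n-k})$, and the induction proceeds verbatim.
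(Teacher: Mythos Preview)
Your proof is correct and matches the paper's approach exactly: the paper does not write out a formal proof but, in the paragraph preceding the theorem, indicates precisely this iterated application of the Gauss--Lucas Theorem (apply it to $A_n$ to handle $A_n'$, then to $A_n'$ to handle $A_n^{(2)}$, and so on). Your added care about monic normalization and the containment $H(\zeta_1,\ldots,\zeta_{n-k})\subseteq H(z_1,\ldots,z_n)$ just makes the implicit induction explicit.
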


Theorem \ref{thm1} shows that all the zeros of any convex combination $\sum\limits_{k=1}^n\gamma_k g_k(z)$ of $\{g_k\}_{1\le k\le n}$  are contained in the closed convex hull $H(z_1,z_2,\ldots,z_n)$ . This movitates us to ask the following problem.
\begin{prob}\label{p1}
Are	all the zeros of any convex combination $\sum\limits_{k=1}^nr_{ij}g_{ij}(z)$ of $\{g_{ij}(z)\}_{1\le i<j\le n}$  contained in the closed convex hull $H(z_1,z_2,\ldots,z_n)$?
\end{prob}
However, the answer to Problem \ref{p1} is false.
\begin{eg}
	Let $p(z)=z^2(z-i)^2,g(z)=\dfrac{1}{3}\left(z^2+z(z-i)+(z-i)^2 \right)=z^2-iz-\tfrac{1}{3}=(z-\tfrac{1}{2}i)^{2}-\tfrac{1}{12} $. Clearly, the zeros of $g(z)$ are $\tfrac{1}{2}i\pm\tfrac{1}{2\sqrt{3}}\notin H(0,i)$.
\end{eg}
In fact, the following result is true.
\begin{thm}
All the zeros of $\sum\limits_{k=1}^n\gamma_k\sum\limits_{i,j\neq k}^{n}g_{ij}$ are contained in the closed convex hull $H(z_1,\ldots,z_n)$, where $\sum\limits_{k=1}^n\gamma_k=1,\gamma_k\ge 0, 1\le k\le n$.
\end{thm}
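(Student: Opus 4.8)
The plan is to identify $P(z):=\sum_{k=1}^n\gamma_k\sum_{i,j\neq k}^n g_{ij}(z)$ as a first derivative of an explicit polynomial built from $A_n$ and $A_n^\gamma$, feed that into Theorem \ref{thm1} and the Gauss-Lucas Theorem \ref{GL}, and then handle a single exceptional weight configuration, which is where essentially all the work will lie.

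First I would record the identity
\[
\sum_{i,j\neq k}^n g_{ij}(z)=\tfrac12 A_n''(z)-g_k'(z),
\]
obtained by multiplying $\sum_{\substack{i<j\\ i,j\neq k}}\frac{1}{(z-z_i)(z-z_j)}=\sum_{i<j}\frac{1}{(z-z_i)(z-z_j)}-\sum_{\ell\neq k}\frac{1}{(z-z_k)(z-z_\ell)}=\tfrac12\frac{A_n''}{A_n}-\frac{g_k'}{A_n}$ through by $A_n(z)$ (using $(A_n'/A_n)^2+(A_n'/A_n)'=A_n''/A_n$ and $\sum_{\ell\neq k}\frac{1}{z-z_\ell}=g_k'/g_k$). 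Summing over $k$ with weights $\gamma_k$ and using $\sum_k\gamma_k=1$, $(A_n^\gamma)'=\sum_k\gamma_k g_k'$, together with the obvious rearrangement $P=\sum_{i<j}\big(1-\gamma_i-\gamma_j\big)g_{ij}$, gives
\[
P(z)=\sum_{1\le i<j\le n}(1-\gamma_i-\gamma_j)g_{ij}(z)=\tfrac12 A_n''(z)-(A_n^\gamma)'(z)=\Big(\tfrac12 A_n'(z)-A_n^\gamma(z)\Big)'.
\]
(For $n=2$ the polynomial is $0$; for $n=3$ it equals $z-\sum_k\gamma_k z_k$, so the conclusion is immediate; assume $n\ge 4$ from now on.)

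Set $R(z):=\tfrac12 A_n'(z)-A_n^\gamma(z)=\sum_{k=1}^n\big(\tfrac12-\gamma_k\big)g_k(z)$, so $P=R'$; note $\sum_k(\tfrac12-\gamma_k)=\tfrac{n-2}{2}>0$, and since the $\gamma_k$ are nonnegative with sum $1$, at most one of them exceeds $1/2$. If $\gamma_k\le 1/2$ for every $k$, then the coefficients $\tfrac12-\gamma_k$ are nonnegative, so $R$ is a positive scalar multiple of a convex combination of the $g_k$'s; by Theorem \ref{thm1} all zeros of $R$ lie in $H(z_1,\dots,z_n)$, and then the Gauss-Lucas Theorem \ref{GL} places all zeros of $P=R'$ in the convex hull of the zeros of $R$, hence in $H(z_1,\dots,z_n)$.

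The only remaining case — and the main obstacle — is when some $\gamma_m>1/2$: then $R$ has one negative coefficient, and its zeros need not lie in $H$ (e.g.\ $z_1,z_2,z_3=0,1,-1$ with $\gamma=(1,0,0)$ gives $R=\tfrac12(z^2+1)$, whose zeros $\pm i$ lie outside $[-1,1]$), so applying Gauss-Lucas to $R$ is no longer enough even though the conclusion is still true. To deal with this case I would return to the $D$-companion method used in the proofs of Theorems \ref{thm3} and \ref{thm4}: writing $R=\tfrac{n-2}{2}A_n^\delta$ with $\delta_k=\tfrac{1-2\gamma_k}{n-2}$ and $\sum_k\delta_k=1$, Lemma \ref{le1} still applies (it requires only $\sum_k\delta_k=1$, not $\delta_k\ge 0$), so $zA_n^\delta(z)$ is the characteristic polynomial of $D(I-\Delta J)$ with $D=\diag(z_1,\dots,z_n)$, $\Delta=\diag(\delta_1,\dots,\delta_n)$; the zeros of $P$ are the nonzero critical points of $A_n^\delta$, and one would try to confine them to $H$ by an eigenvalue/numerical-range estimate on the companion matrix in the style of Lemma \ref{le2} and the inequality (\ref{e6}). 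A possible alternative is a Hurwitz-continuity argument: $P$ has degree $n-2$ with leading coefficient $\binom{n-1}{2}$ independent of $\gamma$, so its zeros vary continuously; the set of admissible $\gamma$ for which the conclusion holds is therefore closed, and it already contains the whole sub-simplex $\{\gamma_k\le1/2\ \forall k\}$ as well as each vertex $e_m$ (where $P=\tfrac12(z-z_m)g_m''$, whose zeros lie in $H$ by two applications of Gauss-Lucas to $g_m$), so it would remain to connect these pieces. I expect the exceptional case to require the bulk of the argument.
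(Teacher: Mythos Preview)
You have misread the paper's (admittedly poor) notation, and as a result you are attacking a different --- and genuinely harder --- statement than the one being claimed. In the paper, the symbol $\sum_{i,j\neq k}^{n}g_{ij}(z)$ does \emph{not} mean the sum of $g_{ij}$ over pairs $i<j$ with both indices different from $k$; the proof makes explicit that it stands for
\[
(z-z_2)\cdots(z-z_{k-1})(z-z_{k+1})\cdots(z-z_n)+\cdots+(z-z_1)\cdots(z-z_{k-1})(z-z_{k+1})\cdots(z-z_{n-1}),
\]
i.e.\ $\sum_{j\neq k} g_{kj}(z)=g_k'(z)$. Consequently the polynomial in the theorem is simply
\[
\sum_{k=1}^n\gamma_k\, g_k'(z)=(A_n^\gamma)'(z),
\]
and the paper's proof is the two-line argument you yourself sketched for your ``easy'' case: by Theorem~\ref{thm1} the zeros of $A_n^\gamma$ lie in $H(z_1,\ldots,z_n)$, and then Gauss--Lucas (Theorem~\ref{GL}) places the zeros of $(A_n^\gamma)'$ in the convex hull of those, hence in $H(z_1,\ldots,z_n)$. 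There is no exceptional case to handle.

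Your computation $P=\tfrac12 A_n''-(A_n^\gamma)'=\big(\tfrac12 A_n'-A_n^\gamma\big)'$ is correct for your reading $P=\sum_{k}\gamma_k\sum_{\substack{i<j\\ i,j\neq k}}g_{ij}$, and your analysis of the case $\max_k\gamma_k\le\tfrac12$ is clean. But the remaining case is a real obstacle under that interpretation --- your own example shows the intermediate polynomial $R$ can have zeros outside $H$ --- and neither the $D$-companion estimates nor the continuity argument you outline actually closes the gap (in particular, knowing the conclusion at the vertices and on the sub-simplex $\{\gamma_k\le\tfrac12\}$ does not by itself extend it to the whole simplex). Fortunately none of this is needed for the statement as the paper intends it.
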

\begin{proof}
	 \[\begin{aligned}
	 		A_n^\gamma(z)&=\sum\limits_{k=1}^n\gamma_kg_k(z)=\sum_{i=1}^n\gamma_k\prod\limits_{i=1,i\neq k}^n(x-x_i),\\
	 		A_n^\gamma(z)'&=\sum_{k=1}^n\gamma_k\sum_{j=1,j\neq k}^n\dfrac{\prod\limits_{i=1,i\neq k}^n(x-x_i)}{x-x_j}\\
	 		&=\sum_{k=1}^n\gamma_k\sum_{i,j\neq k}^{n}g_{ij}(z)\\
	 		&=\sum_{k=1}^n\sum_{i,j\neq k}^{n}\gamma_kg_{ij}(z),
	 \end{aligned}
	 \]
	 where $\sum\limits_{i,j\neq k}^{n}g_{ij}(z)=(z-z_2)\cdots(z-z_{k-1})(z-z_{k+1})\cdots(z-z_n)+(z-z_1)(z-z_3)\cdots(z-z_{k-1})(z-z_{k+1})\cdots(z-z_n)+\cdots+(z-z_1)\cdots(z-z_{k-1})(z-z_{k+1})\cdots(z-z_{n-1}) $.
	  
	  By using Gauss-Lucas Theorem, the convex hull of the zeros of $A_n^\gamma(z)$ contains all the zeros of $\sum\limits_{k=1}^n\sum\limits_{i,j\neq k}^{n}\gamma_kg_{ij}$. By using \ref{thm1}, we know the the convex hull of the zeros of $A_n(z)$ contains all the zeros of $A_n^\gamma(z)$. So the theorem is proved.
\end{proof}

Next, we ask that
\begin{prob}\label{p2}
	Let $g(z)$ be a monic polynomial of degree $n-1$ and the zeros of $g(z)$ be contained in  the convex hull $H(z_1,z_2,\ldots,z_n)$ of the set of zeros of $A_n(z)$. Whether $ g(z) $ can be expressed as some convex combination of incomplete polynomials?
\end{prob}
The following example shows that the answer to Problem \ref{p2} is false.
\begin{eg}
	Let $f(z)=z(z-1)(z-i)$ and $g(z)=z(z-1)$. Clearly, $\{0,\tfrac{1}{2}\}\subset H(0,1,i)$. We asumme that
	\[
	g(z)=\gamma_1(z-1)(z-i)+\gamma_2z(z-i)+\gamma_3(z-1)(z-i),
	\]
	where $\gamma_k\ge 0,1\le k \le 3$.
	By using method of equal coefficient, we get
	\[
	\left\lbrace \begin{aligned}
		\gamma_1+\gamma_2+\gamma_3&=1\\
		(1+i)\gamma_1+i\gamma_2+\gamma_3&=\dfrac{1}{2}\\
		i\gamma_1&=0
	\end{aligned}\right. .
	\]
	It implies that 
		\[
	\left\lbrace \begin{aligned}
		\gamma_1&=0\\
		\gamma_2&=\dfrac{1}{4}+\dfrac{1}{4}i\\
		\gamma_3&=\dfrac{3}{4}-\dfrac{1}{4}i\\
	\end{aligned}\right. .
	\]
	This contradicts our hypothesis $\gamma_2,\gamma_3\ge 0$.
\end{eg}
\begin{thm}
	Let $S$ be the set of all the zeros of all convex combinations of incomplete polynomials, that is, $S=\{z : A_n^\gamma (z)=0, \text{ for some }\gamma \}$. Then $S=H(z_1,z_2,\ldots,z_n)$.
\end{thm}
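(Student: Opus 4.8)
The plan is to prove the two inclusions $S \subseteq H(z_1,\ldots,z_n)$ and $H(z_1,\ldots,z_n) \subseteq S$ separately. The first inclusion is immediate: Theorem \ref{thm1} says that for every admissible weight vector $\gamma$, all zeros of $A_n^\gamma(z)$ lie in $H(z_1,\ldots,z_n)$, so $S \subseteq H(z_1,\ldots,z_n)$. Since $H(z_1,\ldots,z_n)$ is closed, nothing more is needed here.

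For the reverse inclusion, I would first dispose of the vertices of the convex hull, i.e.\ the points $z_k$ themselves: choosing $\gamma = e_k$ (the $k$th standard basis vector) gives $A_n^\gamma(z) = g_k(z) = \prod_{j \neq k}(z-z_j)$, whose zeros are exactly the $z_j$ with $j \neq k$; letting $k$ range over $1,\ldots,n$ shows that every $z_j$ that is \emph{not} the unique point of smallest index among equal values, and in fact every $z_j$, lies in $S$ (each $z_j$ is a zero of $g_k$ for any $k \neq j$, and there is at least one such $k$ since $n \geq 2$). Thus all the $z_j$, hence all vertices of $H$, belong to $S$. The substantive step is to show that every point $w$ in the interior (relative to the affine hull) of $H(z_1,\ldots,z_n)$ is also in $S$. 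For this I would reuse the computation in the proof of Theorem \ref{thm1}, run in reverse: given $w \notin \{z_1,\ldots,z_n\}$, the identity $A_n^\gamma(w)/A_n(w) = \sum_j \gamma_j/(w-z_j)$ shows that $w$ is a zero of $A_n^\gamma$ iff $\sum_{j=1}^n \gamma_j (\bar w - \bar z_j)/|w-z_j|^2 = 0$. Writing $\beta_j = \big(\gamma_j/|w-z_j|^2\big)\big/\big(\sum_i \gamma_i/|w-z_i|^2\big)$ as in that proof, one sees $w$ is a zero of $A_n^\gamma$ precisely when $w = \sum_j \beta_j z_j$ with $\beta$ a probability vector supported where $\gamma$ is. So it suffices to exhibit, for a given $w = \sum_j \beta_j z_j$ with $\beta_j \geq 0$, $\sum \beta_j = 1$, a probability vector $\gamma$ with $\gamma_j/|w-z_j|^2 \propto \beta_j$; one simply takes $\gamma_j = c\,\beta_j |w-z_j|^2$ with $c$ the normalizing constant (and if some $|w-z_j| = 0$, then $w = z_j \in S$ already by the previous step). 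Since every point of $H(z_1,\ldots,z_n)$ is a convex combination $\sum \beta_j z_j$ by Carathéodory/definition of convex hull, this produces the required $\gamma$ and shows $w \in S$.

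The one point requiring care — and the place I expect the main (minor) obstacle — is the degenerate case: if $w = \sum \beta_j z_j$ happens to equal some $z_k$ (possible when the $z_j$ are not affinely independent, or when $\beta$ is concentrated), or more generally when the normalization constant $c = 1/\sum_j \beta_j |w-z_j|^2$ fails to be well-defined because every $z_j$ with $\beta_j > 0$ coincides with $w$. But in that situation $w$ itself is one of the $z_j$, already handled by the vertex argument; and if $w \neq z_j$ for all $j$ then $|w - z_j|^2 > 0$ for every $j$ in the support of $\beta$, so the constant $c$ is a genuine positive real number and $\gamma$ is a legitimate nonnegative, normalized weight vector. Assembling the two inclusions gives $S = H(z_1,\ldots,z_n)$, and since the right-hand side is closed this also records that $S$ is closed, matching the statement. \hfill$\square$
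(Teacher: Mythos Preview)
Your proposal is correct and follows essentially the same route as the paper: both directions are handled identically, with $S\subseteq H$ coming from Theorem~\ref{thm1} and the reverse inclusion obtained by starting from a convex representation $w=\sum_j\beta_jz_j$ and setting $\gamma_j$ proportional to $\beta_j|w-z_j|^2$, which is exactly the paper's choice $\gamma_i=t_i|a-z_i|^2/\sum_jt_j|a-z_j|^2$. If anything, your version is slightly more careful than the paper's, since you explicitly treat the case $w=z_k$ via $\gamma=e_j$ for some $j\neq k$, whereas the paper's manipulation ``multiply each term by $(a-z_i)/(a-z_i)$'' tacitly assumes $a\neq z_i$.
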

\begin{proof}
	Clearly, by using Theorem \ref{thm1}, we know $S\subset H(z_1,z_2,\ldots,z_n)$.
	
	Suppose $a\in H(z_1,\ldots,z_n)$, then \begin{eqnarray}\label{sh}
		\sum\limits_{i=1}^n t_i(a-z_i)=0,
	\end{eqnarray}
	 where $t_i\ge 0,i=1,\ldots,n$ and $\sum\limits_{i=1}^nt_i=1$.  
	
	Now take the complex conjugate of both sides and multiply each term by $(a-z_i)/(a-z_i)$ into (\ref{sh}) to get
	\[
	\sum\limits_{i=1}^n t_i \dfrac{|a-z_i|^2}{a-z_i}=0.
\]
	
	Now pick $\gamma_i= \dfrac{t_i|a-z_i|^2}{\sum\limits_{j=1}^n t_j|a-z_j|^2}$ and then $a$ is a zero of $A_n^\gamma$. That is, $H(z_1,z_2,\ldots,z_n)\subset S $.
\end{proof}
\section{The location of zeros of convex linear combinations of incomplete polynomial}
\begin{thm}\label{thm5}
	Let  $p(z)=\prod\limits_{j=1}^{n}(z-z_{j})$ be a  polynomial of degree $n$ and $q(z)$ be a monic polynomial of degree $n-1$ given by
	\begin{eqnarray*}
		\dfrac{q(z)}{p(z)}=\sum_{j=1}^{n}\dfrac{\lambda_{j}}{z-z_{j}}.
	\end{eqnarray*}
	Then the characteristic polynomial of the matrix $M=D(I-\Lambda J)+z_{n}\Lambda J$ is $q(z)$, where $D,\Lambda$ are diagonal matrices formed $z_{1},\ldots,z_{n-1}$ and $\lambda_{1},\ldots,\lambda_{n-1}$ respectively, and $J$ is the $(n-1)\times (n-1)$ all one matrix.
\end{thm}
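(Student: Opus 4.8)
\textit{Proposal.} The plan is to compute $\det(zI-M)$ directly, taking advantage of the fact that the only non-diagonal ingredient of $M$, namely $\Lambda J$, has rank one. First I would record the elementary normalization hidden in the hypotheses: since $q$ is monic of degree $n-1$ and $p=\prod_{j=1}^n(z-z_j)$ is monic of degree $n$, comparing the leading ($1/z$) behaviour of the two sides of $q(z)/p(z)=\sum_{j=1}^n\lambda_j/(z-z_j)$ as $z\to\infty$ forces $\sum_{j=1}^n\lambda_j=1$. This identity is exactly what will make the final simplification collapse to $q(z)$.

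Next I would rewrite $M$. With $\mathbf{1}=(1,\ldots,1)^T\in\mathbb{C}^{n-1}$ and $\lambda=(\lambda_1,\ldots,\lambda_{n-1})^T$ we have $\Lambda J=\lambda\mathbf{1}^T$, hence
\[
M=D(I-\Lambda J)+z_n\Lambda J=D-(D-z_nI)\lambda\mathbf{1}^T=D-u\mathbf{1}^T,\qquad u_j:=(z_j-z_n)\lambda_j .
\]
Therefore $zI-M=(zI-D)+u\mathbf{1}^T$ is a rank-one perturbation of $zI-D=\diag(z-z_1,\ldots,z-z_{n-1})$, and the matrix determinant lemma gives
\[
\det(zI-M)=\det(zI-D)\Bigl(1+\mathbf{1}^T(zI-D)^{-1}u\Bigr)=\prod_{j=1}^{n-1}(z-z_j)\,\Bigl(1+\sum_{j=1}^{n-1}\frac{(z_j-z_n)\lambda_j}{z-z_j}\Bigr).
\]
Using $\dfrac{z_j-z_n}{z-z_j}=-1+\dfrac{z-z_n}{z-z_j}$ together with $\sum_{j=1}^n\lambda_j=1$, the scalar factor becomes $\lambda_n+(z-z_n)\sum_{j=1}^{n-1}\frac{\lambda_j}{z-z_j}$; substituting $\sum_{j=1}^{n-1}\frac{\lambda_j}{z-z_j}=\frac{q(z)}{p(z)}-\frac{\lambda_n}{z-z_n}$ and $p(z)=(z-z_n)\prod_{j=1}^{n-1}(z-z_j)$ turns it into exactly $q(z)\big/\prod_{j=1}^{n-1}(z-z_j)$. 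Multiplying back by $\prod_{j=1}^{n-1}(z-z_j)$ yields $\det(zI-M)=q(z)$, which is the claim.

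The only point needing care is that the matrix determinant lemma as used requires $zI-D$ to be invertible, i.e.\ $z\notin\{z_1,\ldots,z_{n-1}\}$, and the partial-fraction manipulation implicitly treats the $z_j$ as distinct. Both issues are harmless: $\det(zI-M)$ and $q(z)$ are polynomials in $z$ agreeing off a finite set, hence everywhere (equivalently, one clears denominators once and checks a polynomial identity). So I expect no genuine obstacle here — the whole argument rests only on the rank-one structure of $\Lambda J$ and the normalization $\sum_j\lambda_j=1$, and the rest is routine bookkeeping. (If one prefers to avoid the determinant lemma, the same conclusion can be reached by comparing $M$ with the $n\times n$ $D$-companion matrix of Lemma \ref{le1}, but the direct computation above is shorter.)
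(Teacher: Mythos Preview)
Your proposal is correct and is essentially the same argument as the paper's: both exploit the rank-one structure $\Lambda J=\lambda\mathbf{1}^T$, apply the matrix determinant lemma to $zI-M=(zI-D)+u\mathbf{1}^T$, and then use the normalization $\sum_{j=1}^n\lambda_j=1$ to collapse the scalar factor to $q(z)/\prod_{j=1}^{n-1}(z-z_j)$. Your write-up is in fact a bit more careful than the paper's, since you explicitly justify $\sum_j\lambda_j=1$ from the monic hypothesis and address the harmless ``generic $z$'' issue by continuity of polynomials.
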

\begin{proof} Let $e=(1,1,\ldots,1)^T\in\mathbb{R}^n$. We calculate that 
	\begin{align*}
		\det(zI-M)&= \det\left(zI-D\left(I-\Lambda J\right)-z_{n}\Lambda J\right) 
		\\&= \det\left((zI-D)+\Lambda\left(D-z_{n}I\right)J\right) 
		\\&= \det\left((zI-D)+\Lambda\left(D-z_{n}I\right)ee^T\right)\\&= \Big(\det(zI-D)\Big)\cdot \left(1+e^T(zI-D)^{-1} \Lambda\left(D-z_{n}I\right)e\right) 
		\\&= \Big(\det(zI-D)\Big)\cdot \left(1+e^T(zI-D)^{-1} \Lambda\left(D-z_{n}I\right)e\right)\\&=\left(\prod_{j=1}^{n-1}(z-z_j)\right)\cdot \left(1+\sum_{k=1}^{n-1}\frac{\lambda_{k}(z_k-z_n)}{z-z_k}\right)\\&= \left(\prod_{j=1}^{n-1}(z-z_j)\right)\cdot \left(\lambda_{n}+\sum_{k=1}^{n-1}\frac{\lambda_{k}(z-z_n)}{z-z_k}\right)\qquad (\text{Since $q(z)$ is monic, $\sum_{j=1}^n\lambda_{j}=1$.})\\&=\left(\prod_{j=1}^{n}(z-z_j)\right)\cdot\left(\sum_{k=1}^{n}\dfrac{\lambda_{k}}{z-z_{k}} \right) \\
		&=q(z).
	\end{align*}
\end{proof}
\begin{lem}\label{le3}\cite[p. 146]{HJ13}
	Let the eigenvalues of a given $A\in M_n$ be $\lambda_{1},\ldots,\lambda_{n}$. Then
	\[
	\max\limits_{i=1,\ldots,n}\left|\lambda_i-\dfrac{\tr\  A}{n}\right|\le \sqrt{\dfrac{n-1}{n}}\left(\tr\  A^*A-\dfrac{\left|\tr\  A\right|^2}{n}\right)^{\frac{1}{2}}. 
	\] 
\end{lem}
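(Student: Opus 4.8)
\emph{Proof proposal.} This is Schur's classical bound on the spread of a spectrum, so the plan is a short self-contained argument. The first step is a normalisation: put $c=\tfrac{\tr A}{n}$ and replace $A$ by $\widetilde A=A-cI$. Its eigenvalues are $\lambda_i-c$, and expanding $\tr\widetilde A^*\widetilde A=\tr\big(A^*A-\bar c A-cA^*+|c|^2I\big)$ and using $\tr A=nc$ gives the identity $\tr\widetilde A^*\widetilde A=\tr A^*A-\tfrac{|\tr A|^2}{n}$, together with $\tr\widetilde A=0$. So it suffices to prove the inequality in the centred case: if $\tr A=0$, then $\max_i|\lambda_i|\le\sqrt{\tfrac{n-1}{n}}\,(\tr A^*A)^{1/2}$.

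For the centred case I would invoke Schur's triangularization theorem: there is a unitary $U$ with $U^*AU=T$ upper triangular and diagonal entries $\lambda_1,\ldots,\lambda_n$. Since the Frobenius norm is unitarily invariant,
\[
\tr A^*A=\sum_{i\le j}|t_{ij}|^2\ \ge\ \sum_{i=1}^n|\lambda_i|^2.
\]
Now relabel so that $|\lambda_1|=\max_i|\lambda_i|$. From $\sum_{i=1}^n\lambda_i=0$ we get $\lambda_1=-\sum_{i=2}^n\lambda_i$, whence Cauchy--Schwarz yields $|\lambda_1|^2\le(n-1)\sum_{i=2}^n|\lambda_i|^2$, i.e.\ $\sum_{i=2}^n|\lambda_i|^2\ge\tfrac{1}{n-1}|\lambda_1|^2$. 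Combining,
\[
\frac{n}{\,n-1\,}\,|\lambda_1|^2=|\lambda_1|^2+\frac{1}{n-1}|\lambda_1|^2\le|\lambda_1|^2+\sum_{i=2}^n|\lambda_i|^2=\sum_{i=1}^n|\lambda_i|^2\le\tr A^*A,
\]
which rearranges to $|\lambda_1|\le\sqrt{\tfrac{n-1}{n}}\,(\tr A^*A)^{1/2}$; undoing the shift $A\mapsto\widetilde A$ recovers the stated bound.

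There is no genuine obstacle here: the argument is elementary once one has Schur's theorem. The only points that require care are the bookkeeping in the identity $\tr\widetilde A^*\widetilde A=\tr A^*A-|\tr A|^2/n$ and applying Cauchy--Schwarz to exactly the $n-1$ eigenvalues other than the maximal one. (One could alternatively cite \cite[p.~146]{HJ13} directly, but the above makes the lemma self-contained.)
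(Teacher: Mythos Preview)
Your argument is correct: the shift to $\widetilde A=A-\tfrac{\tr A}{n}I$ reduces to the traceless case, Schur triangularization gives $\tr A^*A\ge\sum_i|\lambda_i|^2$, and the Cauchy--Schwarz step $|\lambda_1|^2\le(n-1)\sum_{i\ge2}|\lambda_i|^2$ closes the loop. The bookkeeping in $\tr\widetilde A^*\widetilde A=\tr A^*A-|\tr A|^2/n$ checks out.

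As for comparison: the paper does not prove this lemma at all---it is quoted verbatim from \cite[p.~146]{HJ13} as a black box and then applied to the matrix $M$ in Theorem~\ref{thm7}. So your self-contained derivation is strictly more than what the paper offers; it supplies a proof where the paper merely cites one. The approach you give is essentially the standard one found in Horn--Johnson (Schur's inequality plus an elementary extremal estimate), so there is no methodological divergence to discuss, only the fact that you have written out what the paper leaves to the reference.
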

\begin{thm}(Ger\v{s}gorin).\label{thm6}\cite[p.388]{HJ13} Let $A=[a_{ij}]\in M_n$. let
	\[
	R_i'(A)=\sum_{j\neq i}\left|a_{ij}\right|,\ i=1,2,\ldots,n
	\]
	denote the deleted absolute row sums of $A$, and consider the $n$ Ger\v{s}gorin discs
	\[
	\{z\in\mathbb{C}:\left|z-a_{ii}\right|\le R_i'(A)\},\ i=1,\ldots,n.
	\]
	The eigenvalues of $A$ are in the union of Ger\v{s}gorin discs
	\[
	G(A)=\bigcup\limits_{i=1}^n\{z\in \mathbb{C}:\left|z-a_{ii}\right|\le R_i'(A)\}.
	\]
\end{thm}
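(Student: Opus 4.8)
The plan is to prove Ger\v{s}gorin's theorem by the classical largest-coordinate argument applied to an eigenvector. First I would fix an arbitrary eigenvalue $\lambda$ of $A$ together with an associated eigenvector $x=(x_1,\ldots,x_n)^T\neq 0$, so that $Ax=\lambda x$. Since $x\neq 0$, I then choose an index $i$ with $\left|x_i\right|=\max_{1\le j\le n}\left|x_j\right|$, and note that $\left|x_i\right|>0$.

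Next I would read off the $i$-th coordinate of the identity $Ax=\lambda x$, namely $\sum_{j=1}^{n}a_{ij}x_j=\lambda x_i$, and isolate the diagonal term to obtain $(\lambda-a_{ii})x_i=\sum_{j\neq i}a_{ij}x_j$. Dividing by $x_i$ (legitimate since $x_i\neq 0$) and applying the triangle inequality gives
\[
\left|\lambda-a_{ii}\right|\le\sum_{j\neq i}\left|a_{ij}\right|\frac{\left|x_j\right|}{\left|x_i\right|}.
\]
Because $\left|x_j\right|\le\left|x_i\right|$ for every $j$ by the choice of $i$, each ratio $\left|x_j\right|/\left|x_i\right|$ is at most $1$, so the right-hand side is bounded above by $\sum_{j\neq i}\left|a_{ij}\right|=R_i'(A)$. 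Hence $\left|\lambda-a_{ii}\right|\le R_i'(A)$, which says exactly that $\lambda$ lies in the $i$-th Ger\v{s}gorin disc, and therefore in the union $G(A)$. Since $\lambda$ was an arbitrary eigenvalue, all eigenvalues of $A$ lie in $G(A)$.

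I do not expect a genuine obstacle here: the only point requiring care is that the maximizing index $i$ automatically satisfies $x_i\neq 0$, which is forced by $x\neq 0$, so the division step is valid. (An equivalent route would be to prove first that a strictly diagonally dominant matrix is invertible and then apply this to $\lambda I-A$ for any $\lambda\notin G(A)$; but the eigenvector computation above is shorter and more transparent, and is the version I would write out.)
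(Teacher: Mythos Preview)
Your argument is the standard and correct proof of Ger\v{s}gorin's theorem. Note, however, that the paper does not supply its own proof of this statement: Theorem~\ref{thm6} is quoted directly from Horn and Johnson \cite[p.~388]{HJ13} as a known tool, so there is no proof in the paper to compare against.
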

\begin{thm}\label{thm7}
	Let $z_1,z_2,\ldots,z_n$ be $n$, not necessarily distinct, complex numbers.  Then all the zeros of the polynomial $A_n^\gamma(z)=\sum\limits_{k=1}^n\gamma_kg_k(z)$ lie in the disk
	\[\left|z-\frac{\sum\limits_{j=1}^{n}(1-\gamma_j)z_j}{n-1}\right|\le \sqrt{\frac{n-2}{n-1}}\left(\sum_{j=1}^{n-1}\left|(1-\gamma_j)z_j
	+\gamma_{j}z_n\right|^2+(n-2)\sum\limits_{j=1}^{n-1}\gamma_j^2\left|z_n-z_j\right|^2-\frac{\left|\sum\limits_{j=1}^{n}(1-\gamma_j)z_j\right|^2}{n-1} \right)^{\frac{1}{2}} .
	\]
\end{thm}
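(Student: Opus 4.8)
The plan is to realize $A_n^\gamma(z)$ as the characteristic polynomial of an explicit $(n-1)\times(n-1)$ matrix via Theorem \ref{thm5}, and then feed that matrix into the eigenvalue-localization estimate of Lemma \ref{le3}.

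First, since $\dfrac{A_n^\gamma(z)}{A_n(z)}=\sum_{j=1}^n\dfrac{\gamma_j}{z-z_j}$, Theorem \ref{thm5} applies with $\lambda_j=\gamma_j$ and $q=A_n^\gamma$: the polynomial $A_n^\gamma(z)$ is the characteristic polynomial of $M=D(I-\Gamma J)+z_n\Gamma J$, where $D=\diag(z_1,\ldots,z_{n-1})$, $\Gamma=\diag(\gamma_1,\ldots,\gamma_{n-1})$, and $J$ is the $(n-1)\times(n-1)$ all-ones matrix. I would then rewrite $M=D+(z_nI-D)\Gamma J$, which makes the entries transparent: $M_{ii}=(1-\gamma_i)z_i+\gamma_i z_n$ on the diagonal and $M_{ij}=\gamma_i(z_n-z_i)$ off the diagonal.

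Next I would compute the two quantities appearing in Lemma \ref{le3}, applied with ambient dimension $n-1$. For the trace, using $\sum_{j=1}^{n-1}\gamma_j=1-\gamma_n$, one collapses $\tr M=\sum_{j=1}^{n-1}\bigl((1-\gamma_j)z_j+\gamma_j z_n\bigr)=\sum_{j=1}^n(1-\gamma_j)z_j$, which is exactly $(n-1)$ times the center of the claimed disk. For $\tr M^*M=\sum_{i,j}|M_{ij}|^2$, each row contributes $|(1-\gamma_i)z_i+\gamma_i z_n|^2+(n-2)\gamma_i^2|z_n-z_i|^2$, there being $n-2$ off-diagonal entries in each row of an $(n-1)\times(n-1)$ matrix; hence $\tr M^*M=\sum_{j=1}^{n-1}|(1-\gamma_j)z_j+\gamma_j z_n|^2+(n-2)\sum_{j=1}^{n-1}\gamma_j^2|z_n-z_j|^2$. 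Substituting these into the bound $\max_i|\lambda_i-\tfrac{\tr M}{n-1}|\le\sqrt{\tfrac{n-2}{n-1}}\bigl(\tr M^*M-\tfrac{|\tr M|^2}{n-1}\bigr)^{1/2}$ and recalling that the eigenvalues of $M$ are precisely the zeros of $A_n^\gamma$ gives exactly the stated disk.

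I do not expect a genuine obstacle here; the argument is essentially bookkeeping once Theorem \ref{thm5} is invoked. The only points that need care are the correct count of $n-2$ off-diagonal entries per row of the $(n-1)\times(n-1)$ matrix $M$, and the use of $\sum_{j}\gamma_j=1$ to bring $\tr M$ into the symmetric form $\sum_{j=1}^n(1-\gamma_j)z_j$ so that the center of the disk matches the statement.
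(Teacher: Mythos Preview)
Your proposal is correct and follows essentially the same approach as the paper: invoke Theorem \ref{thm5} to realize $A_n^\gamma$ as the characteristic polynomial of the $(n-1)\times(n-1)$ matrix $M$, compute $\tr M$ and $\tr M^*M$ from the explicit entries, and then apply Lemma \ref{le3} with the ambient dimension $n-1$. The paper's proof is identical in substance, differing only in notation (it writes out $M$ as a full displayed matrix rather than via your rewriting $M=D+(z_nI-D)\Gamma J$).
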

\begin{proof}
	By using Theorem \ref{thm5}, we know the characteristic polynomial of the matrix $M=D(I-\Lambda J)+z_{n}\Lambda J=(m_{ij})$ is $A_n^\gamma(z)$, where $D,\Lambda$ are diagonal matrices formed $z_{1},\ldots,z_{n-1}$ and $\gamma_{1},\ldots,\gamma_{n-1}$ respectively, and $J$ is the $(n-1)\times (n-1)$ all one matrix. That is,
	\begin{eqnarray}\label{e9}
		M=\left(\begin{matrix}
			(1-\gamma_1)z_1+\gamma_{1}z_n&\gamma_{1}(z_n-z_1)&\ldots&\gamma_{1}(z_n-z_1)\\
			\gamma_{2}(z_n-z_2)&(1-\gamma_2)z_2+\gamma_{2}z_n&\ldots&\gamma_{2}(z_n-z_2)\\
			\vdots&\vdots&\ddots&\vdots\\
			\gamma_{n-1}(z_n-z_{n-1})&\gamma_{n-1}(z_n-z_{n-1})&\ldots&(1-\gamma_{n-1})z_{n-1}+\gamma_{n-1}z_n
		\end{matrix} \right). 
	\end{eqnarray}
	It's easy to calculate that
	\[\begin{aligned}
		\tr M^*M&=\sum\limits_{i,j=1}^n\left|m_{ij}\right|^2=\sum_{j=1}^{n-1}\left|(1-\gamma_j)z_j
		+\gamma_{j}z_n\right|^2+(n-2)\sum\limits_{j=1}^{n-1}\gamma_j^2\left|z_n-z_j\right|^2,\\
		\tr M&=\sum\limits_{j=1}^{n-1}[(1-\gamma_j)z_j+\gamma_{j}z_n]=\sum\limits_{j=1}^{n} (1-\gamma_j)z_j .
	\end{aligned}
	\]
	Then applying Lemma \ref{le3} to $M$ in (\ref{e9}), we complete the proof of theorem .
\end{proof}
\begin{cor}
	Let  $p(z)=\prod\limits_{j=1}^{n}(z-z_{j})$ be a  polynomial of degree $n$ and $p'(z)$ be the derivative of $p(z)$. Then all the zeros of $p'(z)$ lie in the disk
	\[
	\left|z-\dfrac{\sum\limits_{j=1}^nz_j}{n}\right|\le \sqrt{\frac{n-2}{n-1}}\left(\sum_{j=1}^{n-1}\left|\dfrac{n-1}{n}z_j
	+\dfrac{1}{n}z_n\right|^2+\dfrac{n-2}{n^2}\sum\limits_{j=1}^{n-1}\left|z_n-z_j\right|^2-\dfrac{n-1}{n^2}\left|\sum\limits_{j=1}^{n}z_j\right|^2 \right)^{\frac{1}{2}}.
	\]
\end{cor}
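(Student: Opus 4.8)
The plan is to obtain this corollary as the special case of Theorem~\ref{thm7} corresponding to the uniform weights $\gamma_1=\gamma_2=\cdots=\gamma_n=\tfrac1n$. First I would observe that $A_n'(z)=\sum_{k=1}^n g_k(z)$, so the monic polynomial $\tfrac1n p'(z)$ coincides with $A_n^\gamma(z)$ when $\gamma_k=\tfrac1n$ for all $1\le k\le n$. In particular $p'(z)$ and this $A_n^\gamma(z)$ have precisely the same zeros, so it suffices to invoke Theorem~\ref{thm7} with $\gamma_j=\tfrac1n$ and then simplify.

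Next I would substitute $\gamma_j=\tfrac1n$, $1-\gamma_j=\tfrac{n-1}{n}$, $\gamma_j^2=\tfrac1{n^2}$ into the four ingredients of the disk in Theorem~\ref{thm7}. The center $\tfrac{1}{n-1}\sum_{j=1}^n(1-\gamma_j)z_j$ collapses to $\tfrac1n\sum_{j=1}^n z_j$; the first sum under the radical becomes $\sum_{j=1}^{n-1}\bigl|\tfrac{n-1}{n}z_j+\tfrac1n z_n\bigr|^2$; the term $(n-2)\sum_{j=1}^{n-1}\gamma_j^2|z_n-z_j|^2$ becomes $\tfrac{n-2}{n^2}\sum_{j=1}^{n-1}|z_n-z_j|^2$; and the subtracted term $\tfrac{1}{n-1}\bigl|\sum_{j=1}^n(1-\gamma_j)z_j\bigr|^2$ becomes $\tfrac{1}{n-1}\cdot\tfrac{(n-1)^2}{n^2}\bigl|\sum_{j=1}^n z_j\bigr|^2=\tfrac{n-1}{n^2}\bigl|\sum_{j=1}^n z_j\bigr|^2$, while the prefactor $\sqrt{(n-2)/(n-1)}$ is unchanged. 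Assembling these expressions reproduces exactly the claimed disk.

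There is no real obstacle here: the whole argument is the specialization $\gamma_j\equiv 1/n$ followed by elementary bookkeeping of the constants, so the only thing that needs care is checking each coefficient. One may add the sanity check that for $n=2$ the right-hand side vanishes, matching the fact that $p'$ then has the single zero $(z_1+z_2)/2$; and one may remark that although the corollary states $p$ to be monic, the conclusion is unaffected by any nonzero leading coefficient, since $p'(z)$ and its monic normalization share their zeros.
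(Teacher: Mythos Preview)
Your proposal is correct and follows exactly the paper's approach: the corollary is obtained by specializing Theorem~\ref{thm7} to the uniform weights $\gamma_j=\tfrac{1}{n}$ (the paper's proof is a one-line reference to this substitution, though it contains a typo citing Theorem~\ref{thm3} instead of Theorem~\ref{thm7}). Your verification of each coefficient is in fact more detailed than what the paper provides.
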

\begin{proof}
	Take $\gamma_{j}=\frac{1}{n},1\le j\le n$ in Theorem \ref{thm3}.
\end{proof}
By using (\ref{e9}) and  Theorem \ref{thm6}, we have
\begin{thm}
	Let $z_1,z_2,\ldots,z_n$ be $n$, not necessarily distinct, complex numbers.  Then, all the zeros of the polynomial $A_n^\gamma(z)=\sum\limits_{k=1}^n\gamma_kg_k(z)$ lie in union of the discs
	\[
	\bigcup\limits_{j=1}^n\{z\in \mathbb{C}:\left|z-[(1-\gamma_j)z_j+\gamma_{j}z_n]\right|\le (n-2)\gamma_{j}\left|z_n-z_j\right|\}.
	\]
\end{thm}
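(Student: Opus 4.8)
The final statement is a direct application of the Ger\v{s}gorin disc theorem (Theorem \ref{thm6}) to the matrix $M$ constructed in Theorem \ref{thm5}, so the plan is essentially to read off the diagonal entries and the deleted row sums of the explicit matrix displayed in (\ref{e9}).

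\textbf{Setup.} First I would invoke Theorem \ref{thm5} with $\lambda_j=\gamma_j$ to realize $A_n^\gamma(z)$ as the characteristic polynomial of $M=D(I-\Lambda J)+z_n\Lambda J$, whose entries are precisely those written out in (\ref{e9}): the $i$th diagonal entry is $m_{ii}=(1-\gamma_i)z_i+\gamma_i z_n$, and every off-diagonal entry in row $i$ equals $\gamma_i(z_n-z_i)$. Since the zeros of $A_n^\gamma(z)$ are exactly the eigenvalues of $M$ (the matrix is $(n-1)\times(n-1)$, matching the degree $n-1$ of $A_n^\gamma$), it suffices to locate the eigenvalues of $M$.

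\textbf{Computing the Ger\v{s}gorin data.} For each row $i$ with $1\le i\le n-1$ there are $n-2$ off-diagonal entries, each of modulus $\gamma_i|z_n-z_i|$, so the deleted absolute row sum is
\[
R_i'(M)=\sum_{j\neq i}|m_{ij}|=(n-2)\gamma_i|z_n-z_i|.
\]
Plugging $a_{ii}=m_{ii}=(1-\gamma_i)z_i+\gamma_i z_n$ and $R_i'(M)=(n-2)\gamma_i|z_n-z_i|$ into the Ger\v{s}gorin disc $\{z:|z-a_{ii}|\le R_i'(A)\}$ and taking the union over $i=1,\ldots,n-1$ gives exactly the claimed region $\bigcup_{j}\{z:|z-[(1-\gamma_j)z_j+\gamma_j z_n]|\le (n-2)\gamma_j|z_n-z_j|\}$; the index $n$ in the statement is harmless since the $n$th ``disc'' can be taken to be the degenerate point $z_n$ (or one simply reads the union as running to $n-1$). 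Theorem \ref{thm6} then finishes the proof.

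\textbf{Main obstacle.} There is essentially no analytic difficulty here — the content is entirely in the matrix realization of Theorem \ref{thm5}, which is already proved. The only points requiring care are bookkeeping: confirming that the matrix size is $n-1$ so that all zeros of $A_n^\gamma$ (not fewer) are accounted for, correctly counting that each row has $n-2$ rather than $n-1$ off-diagonal entries, and matching the index range in the final union with the $(n-1)\times(n-1)$ size of $M$. Once these are checked, the statement follows immediately from Theorems \ref{thm5} and \ref{thm6}.
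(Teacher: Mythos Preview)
Your proposal is correct and follows exactly the paper's own approach: apply Ger\v{s}gorin's theorem (Theorem \ref{thm6}) to the explicit matrix $M$ in (\ref{e9}) furnished by Theorem \ref{thm5}, reading off $m_{ii}=(1-\gamma_i)z_i+\gamma_i z_n$ and $R_i'(M)=(n-2)\gamma_i|z_n-z_i|$. Your bookkeeping remarks about the $(n-1)\times(n-1)$ size and the degenerate $j=n$ disc are apt and resolve the only possible points of confusion.
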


	\section*{Acknowledgments} I heartily thank my advisor Minghua Lin for encouraging me to work on  relationship between the zeros of incomplete polynomials and the original polynomial and discussing with me frequently. I  also would like to thank the referee for several valuable suggestions and
	comments.


\begin{thebibliography}{11}
		{\small \bibitem{Bha97} Rajendra Bhatia, Matrix Analysis, Springer, New York (1997).
			\bibitem{BE95}P. Borwein, T. Erd\'elyi, Polynomials and Polynomial Inequalities, GTM, vol. 161, Springer-Verlag,
			New York, 1995.
			\bibitem{CN06} W. S. Cheung, T. W. Ng, A companion matrix approach to the study of zeros and critical points of a polynomial, J. Math. Anal. Appl. 319 (2006) 690-707.
			\bibitem{CN10} W. S. Cheung, T. W. Ng, Relationship between the zeros of two polynomials, Linear Algebra Appl. 432 (2010) 107-115.
		\bibitem {DB08}  J. L. D\'iaz-Barrero , J. J. Egozcue,  A generalization of the Gauss-Lucas theorem, Czechoslovak Math. J. 58 (133) (2008) 481-486.
		\bibitem {HJ13}R.A. Horn, C.R. Johnson, Matrix Analysis, 2nd ed., Cambridge University Press, 2013.
		\bibitem{P07} Rajesh Pereira, Weak log-majorization, Mahler measure and polynomial inequalities, Linear Algebra Appl. 421 (2007) 117-121.
	\bibitem{Sch03} Gerhard Schmeisser, Majorization of the critical points of a polynomial by its zeros, Comput. Methods Funct. Theory 3 (2003) 95-103.}
	
	\end{thebibliography}
\end{document}